\newlength{\snfigurewidth}
\newlength{\snfigureheightmax}
\DeclareDocumentCommand{\eqref}{m}{\labelcref{#1}}
\crefname{setting}{Setting}{Settings}
\crefname{setting-s}{Setting}{Settings}
\crefname{assumption-t}{Assumption}{Assumptions}
\DeclareDocumentCommand{\InputData}{mO{}}{
  \def\DataPrefix{#2}
  \input{#1}}
\pgfplotsset{compat=1.12}
\pgfplotsset{every axis/.append style={
    no markers
}}
\pgfplotsset{every axis plot/.append style={
    line width=0.6pt
}}
\pgfplotsset{every axis legend/.append style={
    at={(1.02,1)},
    anchor=north west
}}
\definecolor{xgfs-normal6-1}{RGB}{64, 83, 211}
\definecolor{xgfs-normal6-2}{RGB}{221, 179, 16}
\definecolor{xgfs-normal6-3}{RGB}{181, 29, 20}
\definecolor{xgfs-normal6-4}{RGB}{0, 190, 255}
\definecolor{xgfs-normal6-5}{RGB}{251, 73, 176}
\definecolor{xgfs-normal6-6}{RGB}{0, 178, 93}
\pgfplotsset{every axis/.append style={cycle list name=xgfs-normal6}}
\DeclareDocumentCommand{\N}{}{\mathbb{N}}
\DeclareDocumentCommand{\Z}{}{\mathbb{Z}}
\DeclareDocumentCommand{\R}{}{\mathbb{R}}
\DeclareDocumentCommand{\<}{}{\langle}
\DeclareDocumentCommand{\>}{}{\rangle}
\DeclareDocumentCommand{\Hzdiv}{}{H_0^{\mathrm{div}}}
\DeclareDocumentCommand{\op}{m}{\operatorname{#1}}
\DeclareDocumentCommand{\Div}{}{\operatorname{div}}
\DeclareDocumentCommand{\supp}{}{\operatorname*{supp}}
\DeclareDocumentCommand{\argmin}{}{\operatorname*{arg\,min}}
\newtheorem{theorem}{Theorem}[section]
\newtheorem{definition}[theorem]{Definition}
\newtheorem{lemma}[theorem]{Lemma}
\newtheorem{proposition}[theorem]{Proposition}
\newtheorem{corollary}[theorem]{Corollary}
\newtheorem{remark}[theorem]{Remark}
\DeclareDocumentCommand{\di}{O{x}}{\mathord{\,\mathrm{d}#1}}
\definecolor{algcomment}{HTML}{444444}
\definecolor{algcommand}{HTML}{000055}
\DeclareDocumentCommand { \Return } { m } { \State { \mycourse_algcommand_style: return} ~ #1 }
\DeclareDocumentCommand{\Code}{m}{%
  \begingroup
  \ttfamily
  \begingroup\lccode`~=`/\lowercase{\endgroup\def~}{/\discretionary{}{}{}}%
  \begingroup\lccode`~=`[\lowercase{\endgroup\def~}{[\discretionary{}{}{}}%
  \begingroup\lccode`~=`.\lowercase{\endgroup\def~}{.\discretionary{}{}{}}%
  \catcode`/=\active\catcode`[=\active\catcode`.=\active\catcode`_=\active
  \scantokens{#1\noexpand}%
  \endgroup%
}
\DeclareDocumentCommand{\op}{m}{\operatorname{#1}}
\DeclareDocumentCommand{\Hzdiv}{}{H_0^{\mathrm{div}}}
\renewcommand{\todo}[2][]{\tikzexternaldisable\@todo[#1]{#2}\tikzexternalenable}
\newcommand{\tododone}[2][]{\tikzexternaldisable\@todo[color=green!80!black,#1]{#2}\tikzexternalenable}
\let\oldfrac\frac
\renewcommand{\frac}[2]{
  \mathchoice
    {\tfrac{#1}{#2}}
    {\oldfrac{#1}{#2}}
    {\oldfrac{#1}{#2}}
    {\oldfrac{#1}{#2}}
}
\NewDocumentCommand{\LambdaStar}{}{\Lambda}
\NewDocumentCommand{\LambdaOp}{}{\Lambda^*}
\NewDocumentCommand{\NormLambda}{}{\|\Lambda\|}
\def\D{\mathcal{D}}
\begin{document}

\title[A General Decomposition Method for a Convex Problem]{A General Decomposition Method for a Convex Problem Related to Total Variation Minimization}
\author[2]{\fnm{Stephan} \sur{Hilb}}
\author*[1]{\fnm{Andreas} \sur{Langer}}\email{andreas.langer@math.lth.se}

\affil*[1]{%
  \orgdiv{Department for Mathematical Sciences}, %
  \orgname{Lund University}, %
  \orgaddress{%
    \street{Box~117}, %
    \postcode{221 00}, %
    \city{Lund}, %
  \country{Sweden}}%
}
\affil[2]{%
  \orgaddress{%
    \city{Stuttgart}, %
  \country{Germany}}%
}

\abstract{
We consider sequential and parallel decomposition methods for a dual problem of a general total variation minimization problem with applications in several image processing tasks, like image inpainting, estimation of optical flow and reconstruction of missing wavelet coefficients. The convergence of these methods to a solution of the global problem is analysed in a Hilbert space setting and a convergence rate is provided. Thereby, these convergence result hold not only for exact local minimization but also if the subproblems are just solved approximately. As a concrete example of an approximate local solution process a surrogate technique is presented and analysed. Further, the obtained convergence rate is compared with related results in the literature and shown to be in agreement with or even improve upon them. Numerical experiments are presented to support the theoretical findings and to show the performance of the proposed decomposition algorithms in image inpainting, optical flow estimation and wavelet inpainting tasks.
}



\maketitle


\section{Introduction}
The dimensionality of images has been tremendously increased in recent years due to the improvement of hardware. In order to further post-process such large-scale data in a distributed parallel or memory-constrained setting, decomposition methods may be used, which split the original problem into a sequence of smaller subproblems that can be solved independently of each other while still approaching the original solution by means of an iterative algorithm. One
particular approach are domain decomposition algorithms \cite{DolJolNat, QuaVal, TosWid} which subdivide the
problem domain. Typical examples of post-processing images include the removal of noise (denoising), the completion of missing data (inpainting) and the analysis of the data, as the computation of the optical flow in image sequences. In such applications one is usually interested in solutions in which edges are preserved. The total variation (TV) is well-know to promote discontinuities and hence is widely used in image processing tasks. Thereby one may consider the following regularized TV-model, cf. \cite{HinKun}, 
\begin{equation} \label{eq:model-motivation}
  \inf_{u \in L^2(\Omega)^m \cap BV(\Omega)^m}
  \tfrac{1}{2} \|Tu - g\|_{L^2(\Omega)}^2  +
  \tfrac{\beta}{2}\|u\|_{L^2(\Omega)}^2 + \lambda \int_\Omega |Du|,
\end{equation}
where $\Omega \subset \R^d$, $d \in \N$, is an open, bounded and simply
connected domain with Lipschitz boundary, $g\in L^2(\Omega)$ describes the observed data, $T: L^2(\Omega)^m \to L^2(\Omega)$ with $m\in\N$ is a linear bounded operator, $\beta\geq 0$, $\lambda >0$, and $\int_\Omega |Du|$ denotes the total variation of $u$ in $\Omega$ defined by
\begin{equation} \label{eq:tv}
  \begin{aligned}
    \int_\Omega |Du|:= \sup &\Big\{\int_{\Omega} u \cdot \operatorname{div}\vec v \di[x] \;:\;
  \vec v\in (C_0^\infty(\Omega))^{d\times m}, \\
                            &\qquad |\vec v(x)|_F \leq 1 \text{
for almost every (f.a.e.) $x\in\Omega$} \Big\},
  \end{aligned}
\end{equation}
with $|\cdot|_F:\R^{d\times m} \mapsto \R$ being the Frobenius norm, cf. \cite{ours2022semismooth}. We recall that $BV(\Omega)^m$, i.e., the space of functions with bounded variation, equipped with the norm $\|\cdot\|_{BV}:= \|\cdot\|_{L^1(\Omega)} + \op{TV}(\cdot)$ is a Banach space \cite[Theorem 10.1.1]{AttButMic}. Note that $m\in\N$ describes the number of output channels, e.g., for grey-scale images we set $m=1$ while for motion fields we have $m=d$. 

The crucial difficulty of deriving decomposition methods for total variation minimization problems lies in the fact that the total variation is non-differentiable and non-additive with respect to a disjoint splitting of the domain $\Omega$. In fact, let $\Omega_1$ and $\Omega_2$ be a disjoint decomposition of $\Omega$, then we have the following splitting property, cf. \cite[Theorem 3.84]{AmbFusPal},
\begin{equation}\label{Eq:TVsplitting}
  \begin{aligned}
\int_\Omega |D(u_{\mid_{\Omega_1}}+u_{\mid_{\Omega_2}})| &=
\int_{\Omega_1}|D(u_{\mid_{\Omega_1}})|+
\int_{\Omega_2}|D(u_{\mid_{\Omega_2}})| \\
&\qquad+ \int_{\partial \Omega_1 \cap \partial \Omega_2} |u_{\mid_{\Omega_1}}^+-u_{\mid_{\Omega_2}}^-|\text{ d} \mathcal{H}^{d-1}(x),
  \end{aligned}
\end{equation}
where $\mathcal{H}^{d}$ denotes the Hausdorff measure of dimension $d$ and the symbols $u^+$ and $u^-$ the ``interior'' and ``exterior'' trace of $u$ on $\partial \Omega_1 \cap \partial \Omega_2$ respectively. That is the total variation of a function of the whole domain equals the sum of the total variation on the subdomains plus the size of the possible jumps at the interface. Exactly these jumps at the interfaces are important as we want to preserve crossing discontinuities and the correct matching where the solution is continuous. A failure of a decomposition method for total variation minimization has been reported in \cite{ForKimLanSch} with respect to a wavelet space decomposition. There a condition is derived which allows to check for global optimality of a limit point generated by the decomposition method. Although the method seems to work fine in practice, a counterexample showed in \cite{ForKimLanSch} that this condition does not hold in general. Nevertheless, this condition may be utilized in order to check aposteriori whether the splitting method found a good numerical approximation. 
First domain decomposition methods for total variation minimization are presented in \cite{ForLanSch2010, ForSch, LanOshSch}.
Although their convergence and monotonic decay of the energy is theoretically ensured, the convergence to the solution of the global problem cannot be guaranteed in general, as counterexamples in \cite{Lan2021, LeeNam} illustrate. However, in \cite{HinLan2013, HinLan2014} an estimate of the distance of the numerical solution generated by such a decomposition method to the true minimizer of the original problem is derived. Utilizing this estimate demonstrated in \cite{HinLan2013, HinLan2014} that the splitting methods work in practice quite well for total variation minimization, as they indeed generate sequences for which this estimate indicates convergence to the global minimizer.

To overcome the difficulties due to the minimization of a non-smooth and non-additive objective in \eqref{eq:model-motivation} a predual problem of \eqref{eq:model-motivation}, as in  \cite{ChaTaiWanYan, HinLan2015_1} for the case $T=I$, $\beta=0$, and $m=1$, may be considered. In fact, a predual formulation of \eqref{eq:model-motivation} can be derived which involves constrained minimization of a smooth functional \cite{ours2022semismooth}.

\begin{proposition}[cf.\ {\cite{ours2022semismooth}}] \label{prop:duality}
  Let $V:=\Hzdiv(\Omega)^m$, $W:=L^2(\Omega)^{m}$. Problem \eqref{eq:model-motivation} is dual to
  \begin{equation} \label{eq:model-motivation-predual}
    \inf_{p \in K} \big\{
      \D(p) := \tfrac{1}{2} \|\LambdaStar p - T^* g\|_{B^{-1}}^2
    \big\},
  \end{equation}
  where
  $K :=  \{p \in V: |p(x)|_{F} \le \lambda \text{ f.a.e. } x\in\Omega\}$,
  $\LambdaStar: V \to W$, $\LambdaStar p = \Div p$,
  $T^*:L^2(\Omega) \to W$ is the adjoint operators of $T$,
  $B: W \to W$
  denotes the operator $B := \alpha_2 T^* T + \beta I$ and
  the norm is given by $\|u\|_{B^{-1}}^2 := \<u,
  {B^{-1}}u\>_{W}$ for $u\in W$, where $\<\cdot,\cdot\>_{W}$ denotes the $W$-inner product.

  The unique solution $\hat u$ of \eqref{eq:model-motivation}
  is related to any solution $\hat p$ of \eqref{eq:model-motivation-predual} by
  \begin{equation} \label{eq:duality-opt}
    \hat u = B^{-1}(-\LambdaStar \hat p + T^* g)
    \qquad\text{and}\qquad
    \forall p \in K: \<\LambdaOp \hat u, p - \hat p\>_{V^*, V} \le 0,
  \end{equation}
  where $\LambdaOp:W^* \to V^*$ is the adjoint operators of $\LambdaStar$ and $\<\cdot, \cdot\>_{V^*, V}$ denotes the duality pairing between $V$ and its dual space $V^*$.
\end{proposition}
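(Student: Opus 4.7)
The plan is to derive the predual by rewriting the total variation through its variational definition, exchanging infimum and supremum, and then reading off the optimality conditions from the resulting saddle-point problem.

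First I would represent the total variation as a supremum over vector fields in the larger space $V = \Hzdiv(\Omega)^m$ rather than $C_0^\infty$: specifically,
$$\lambda \int_\Omega |Du| = \sup_{p \in K} \<u, \LambdaStar p\>_{W} = \sup_{p \in K} \<\LambdaOp u, p\>_{V^*, V},$$
for $u \in L^2(\Omega)^m \cap BV(\Omega)^m$. Replacing the set of smooth compactly supported test fields in \eqref{eq:tv} by $K$ requires a density argument compatible with the pointwise Frobenius constraint; the standard route is to mollify $p \in K$ while preserving the bound $|p|_F \le \lambda$ and the homogeneous boundary behaviour encoded in $\Hzdiv$. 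This is where I expect the main technical obstacle, because one must reconcile the essentially-everywhere pointwise constraint with a function space defined through distributional divergence and zero normal trace.

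Once this representation is in place, the primal problem becomes the saddle-point problem
$$\inf_{u \in W} \sup_{p \in K} L(u,p), \qquad L(u,p) := \tfrac12\|Tu-g\|_{L^2}^2 + \tfrac{\beta}{2}\|u\|_{L^2}^2 + \<u, \LambdaStar p\>_{W}.$$
I would then invoke a minimax theorem (Rockafellar's convex-concave saddle theorem, for instance): $K$ is convex and weakly closed and bounded in $V$; $L$ is convex, lower semicontinuous, and coercive in $u$ (thanks to $\beta\geq 0$ together with the standing positivity of $B$); and $L$ is affine and weakly continuous in $p$. These hypotheses justify swapping inf and sup. For each fixed $p \in K$, the inner infimum in $u$ is a strictly convex quadratic with unique Euler equation $T^*(Tu - g) + \beta u + \LambdaStar p = 0$, equivalently $B u = T^* g - \LambdaStar p$, giving the minimizer $u^*(p) = B^{-1}(T^*g - \LambdaStar p)$. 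Substituting back and using self-adjointness of $B$ collapses the objective to
$$\inf_u L(u,p) = -\tfrac12\|\LambdaStar p - T^* g\|_{B^{-1}}^2 + \tfrac12\|g\|_{L^2}^2,$$
so maximizing over $p \in K$ is equivalent to minimizing $\D$ over $K$, which is exactly \eqref{eq:model-motivation-predual}.

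It remains to read off the relations \eqref{eq:duality-opt}. The first is immediate: at the saddle point $(\hat u, \hat p)$, the inner optimality condition yields $\hat u = B^{-1}(-\LambdaStar \hat p + T^* g)$, and strict convexity of the primal in $u$ gives uniqueness of $\hat u$. For the variational inequality, I would compute $\nabla \D(p) = \LambdaOp B^{-1}(\LambdaStar p - T^* g)$, so at $\hat p$ one has $\nabla \D(\hat p) = -\LambdaOp \hat u$; the first-order necessary condition $\<\nabla \D(\hat p), p - \hat p\>_{V^*, V} \ge 0$ for all $p \in K$ then rearranges to $\<\LambdaOp \hat u, p - \hat p\>_{V^*, V} \le 0$, closing the proof. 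Strong duality (zero duality gap) is automatic from the minimax argument above, so no separate verification of a Slater-type qualification is needed beyond the weak compactness of $K$ already exploited.
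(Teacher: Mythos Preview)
The paper does not supply its own proof of this proposition; it is stated with the annotation ``cf.\ \cite{ours2022semismooth}'' and the argument is deferred entirely to that reference. There is therefore nothing in the present paper to compare your proposal against line by line.

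That said, your outline is the standard Fenchel--Rockafellar/saddle-point derivation one would expect for a result of this type, and it is essentially correct. The one place you rightly flag as delicate is the passage from the definition \eqref{eq:tv} with smooth compactly supported test fields to the supremum over $K \subset \Hzdiv(\Omega)^m$: this does require a density argument that respects the pointwise Frobenius bound, and is precisely the kind of technical step the cited companion paper would have to supply. The remainder --- the minimax swap, the explicit inner minimization yielding $u^*(p) = B^{-1}(T^*g - \LambdaStar p)$, and reading off \eqref{eq:duality-opt} from $\nabla\D(\hat p) = -\LambdaOp \hat u$ --- is routine and correctly sketched.
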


Some comments on \cref{prop:duality} are in order: 
To guarantee the existence of a minimizer of \eqref{eq:model-motivation-predual} we assume that the bilinear form $a_B: W \times W \to \R$, $a_B(u,v) := \alpha_2 \<Tu, Tv\>_{W} + \beta \<u, v\>_{W}$  is coercive \cite[Theorem 3.5]{ours2022semismooth}, i.e.\  $a_B(u, u) \ge c_B \|u\|_{W}^2$ with coercivity constant $c_B > 0$ and $\|\cdot\|_W$ being the norm induced by the $W$-inner product. In particular this implies the coercivity of $\D$, i.e., for any feasible sequence $(p^n)_{n\in\N}
\subset K$
\begin{equation} \label{eq:coerciveness}
  \|p^n\|_V \to \infty
  \implies
  \D(p^n) \to \infty
\end{equation} 
with $\|\cdot\|_V$ being the norm associated to $V$ \cite{ours2022semismooth}. Further the coercivity of $a_B$ guarantees the invertibility of $B$ \cite{ours2022semismooth}. 
\Cref{prop:duality} allows one to solve for $\hat p$ in the predual domain of
\eqref{eq:model-motivation-predual} and to later assemble the original solution
$\hat u$ using the optimality relation \eqref{eq:duality-opt}.

Here and in the sequel we write $V$ and $W$ instead of $\Hzdiv(\Omega)^m$ and $L^2(\Omega)^{m}$ as the below presented algorithms (see \cref{alg:ddpar} and \cref{alg:ddseq}) as well as the associated theory also holds for problems of type  \eqref{eq:model-motivation-predual} in a general Hilbert space setting, see \cref{rem:general:setting} below.

Based on a dualization as in \cref{prop:duality} for the setting $T=I$, $\beta=0$ and $m=1$ in \cite{ChaTaiWanYan, HinLan2015_1} convergent overlapping and nonoverlapping domain decomposition methods are introduced. While the convergence in \cite{HinLan2015_1} for a nonoverlapping splitting is proven in a discrete setting, in \cite{ChaTaiWanYan} for an overlapping splitting even a convergence rate in a continuous setting is guaranteed.  These two papers allowed to derive overlapping \cite{LangerGaspoz:19} and nonoverlapping \cite{LeeNam} domain decomposition methods for the primal problem \eqref{eq:model-motivation} in the respective setting and for $T=I$, $\beta=0$ and $m=1$, together with a convergence analysis which ensures that a minimizer of the global problem is indeed approached. 
Since then a series of splitting techniques for total variation minimization have been presented in the literature \cite{LeeNamPark2019, LeeParkPark2019, Lee2019fast, LeePark2019, LiZhangChangDuan2021, park2020additive, park2020overlapping, park2021accelerated}.
For an introduction to domain decomposition approaches for total variation minimization we refer the reader to \cite{Lan2021,LeePark:20}.

In this paper we generalize the overlapping splitting method from \cite{ChaTaiWanYan}, which is restricted to denoising, to the more general problem \eqref{eq:model-motivation-predual} where $T$ can be any arbitrary linear and bounded operator, and hence to applications like inpainting and calculating the optical flow in image sequences. Further while the analysis of the decomposition method in \cite{ChaTaiWanYan} assumes exact local minimization, in our case an approximate local minimization is sufficient. This requires a new convergence analysis which differs significantly from the one in \cite{ChaTaiWanYan}. Moreover we are even able to improve the convergence rate of \cite{ChaTaiWanYan} by a constant. We provide a particular example in which the subproblems are approximated (solved inexactly) by so-called \emph{surrogate} functionals, as in \cite{ForLanSch2010, ForSch, HinLan2014}. For solving the subspace problems we adjust the semi-implicit algorithm by Chambolle \cite{Chambolle:04} to our setting. While the algorithm in \cite{Chambolle:04} is derived in a discrete setting and for image denoising problems only (i.e., $T=I$), we adjust it to our problem, where $T$ might be any bounded and linear operator as already mentioned above, and Hilbert space setting.

We would like to mention that recently in \cite{park2020additive} a general framework for analysing additive Schwarz methods of convex optimization problems as gradient methods has been presented.
For the special case of parallel decomposition their analysis covers
ours, while we extend our results to sequential decomposition which
\cite{park2020additive} does not cover.
We also consider a slightly different notion of approximate minimization (see
\cref{def:approxmin}) for the local subproblems which does not seem to map to
the approximate notion considered in \cite{park2020additive} in an obvious way.

The rest of the paper is organized as follows: In \cref{sec:fundaments} we introduce the setting of our decomposition which is based on the definition of a partition of unity operator. The proposed parallel and sequential decomposition algorithms are described in \cref{sec:algorithm} and their convergence analysis is presented in \cref{sec:convergence-analysis}. In particular we prove the convergence of the presented algorithms to a solution of the global problem together with a convergence rate. In \cref{sec:comaprison} we compare our findings with related results already presented in the literature. As our proposed decomposition algorithms allow for approximate solutions of the subproblems, in \cref{sec:surrogate-technique} we present a concrete example for such a case utilizing the surrogate technique. For solving the constituted subdomain problems we describe in \cref{sec:semi-implicit-algorithm} the semi-implicit algorithm of Chambolle \cite{Chambolle:04} in a general Hilbert space setting for our type of problems. In \cref{sec:numerics} numerical experiments are presented verifying the theoretical sublinear convergence of the proposed decomposition algorithms as well as showing the practical behaviour. We conclude in \cref{sec:conclusion} with some final remarks.

\section{Fundamentals}\label{sec:fundaments}

As we are presenting a decomposition method for \eqref{eq:model-motivation-predual} in the sequel we use the notations and definitions of \cref{prop:duality}. 
Further for a bounded linear operator $A:H_1\to H_2$ between two Hilbert spaces $H_1$ and $H_1$ we use $\|A\|$ for the respective operator norm.

\subsection{Decomposition Setting}

We will analyse a decomposition algorithm for problem \eqref{eq:model-motivation-predual} that
requires a suitable partition of unity respecting the closed convex set $K$ or
more precisely: bounded linear operators $\theta_i: V \to V$, $i = 1, \dotsc,
M$, $M \in \N$ such that
\begin{align} \label{eq:partition-of-unity}
  I = \sum_{i=1}^M \theta_i  \qquad\text{and}\qquad
  K = \sum_{i=1}^M \theta_i K.
\end{align}
Note here that, since $\theta_i$ is a bounded linear operator,
$\theta_i K = \{\theta_i p: p \in K\} \subset V$ stays
closed and convex.

The requirements for the partition given in \eqref{eq:partition-of-unity} are
in particular fulfilled by the following domain decomposition formulation.
Let $\Omega_i$, $i = 1, \dotsc, M$, $M \in \N$ be bounded open sets with Lipschitz boundary such that $\bigcup_{i=1}^M \Omega_i = \Omega$.
Denote by $\tilde \theta_i: \Omega \to [0,1]$, $i = 1, \dotsc, M$ a partition of unity satisfying
\begin{enumerate}[(i)]
  \item
    $\tilde \theta_i \in W^{1,\infty}(\Omega)$,
  \item
    $1 = \sum_{i=1}^M \tilde \theta_i$,
  \item
    $\supp \tilde \theta_i \subset \overline{\Omega}_i$.
\end{enumerate}
We then define the partition of unity operator $\theta_i: V \to V$ by pointwise multiplication
\begin{equation} \label{eq:partition-of-unity-tv}
  (\theta_i p)(x) := \tilde \theta_i(x) p(x),
\end{equation}
for all $p \in V$.

\begin{lemma} \label{lem:partition-of-unity-requirements}
  The partition of unity operators $(\theta_i)_{i=1}^M$ defined in \eqref{eq:partition-of-unity-tv} satisfy the requirements of \eqref{eq:partition-of-unity},
  where $K$ is given by \cref{prop:duality}.
  \begin{proof}
    Linearity of $\theta_i$, $i=1,\dotsc,M$, is inherited from the pointwise multiplicative definition in \eqref{eq:partition-of-unity-tv}.
    Let $p \in V = \Hzdiv(\Omega)^m$,
    then
    \begin{equation*}
      \begin{aligned}
      \|\tilde \theta_i p\|_{L^2}
      &\le \|\tilde \theta_i\|_{L^\infty} \|p\|_{L^2}, \\
      \|\Div (\tilde \theta_i p)\|_{L^2}
      &=\|\nabla \tilde \theta_i p + \tilde \theta_i \Div p\|_{L^2} \\
      &\le \|\nabla \tilde \theta_i\|_{L^\infty}\|p\|_{L^2} + \|\tilde \theta_i\|_{L^\infty} \|\Div p\|_{L^2},
      \end{aligned}
    \end{equation*}
    and thus, since $\tilde \theta_i \in W^{1,\infty}(\Omega)$ and in
    particular $\nabla \tilde \theta_i \in L^\infty(\Omega)$, we have proven that $\theta_i:V \to V$ is indeed well-defined and bounded.

    Due to the pointwise nature of \eqref{eq:partition-of-unity-tv} we see that for $p \in V$:
    \begin{align*}
      \Big(\sum_{i=1}^M \theta_i p\Big)(x)
      = \sum_{i=1}^M \tilde \theta_i(x) p(x)
      = p(x)
    \end{align*}
    which shows $I = \sum_{i=1}^M \theta_i$.

    We have $K \subset \sum_{i=1}^M \theta_i K$ per definition.
    To show the other inclusion let $p^i \in \theta_i K$, $i = 1, \dotsc, M$.
    Then we see that in a pointwise fashion
    \begin{align*}
      \Big|\sum_{i=1}^M p^i(x)\Big|_{F}
      \le \sum_{i=1}^M |p^i(x)|_F
      \le \sum_{i=1}^M \tilde \theta_i(x) \lambda
      \le \lambda,
    \end{align*}
    thus showing that $p^i \in K$.
  \end{proof}
\end{lemma}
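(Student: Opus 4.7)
The plan is to verify each of the three requirements in \eqref{eq:partition-of-unity} in turn: that every $\theta_i$ is a bounded linear operator from $V$ to $V$, that the operators sum to the identity, and that $K$ equals the Minkowski sum $\sum_{i=1}^M \theta_i K$.

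Linearity of each $\theta_i$ is immediate from the pointwise multiplicative definition \eqref{eq:partition-of-unity-tv}. For boundedness, the key point is that $V = \Hzdiv(\Omega)^m$ requires control of both $\|\cdot\|_{L^2}$ and $\|\Div(\cdot)\|_{L^2}$, so I would estimate each part separately. The $L^2$-bound follows at once from $\tilde \theta_i \in L^\infty$. For the divergence, the product rule gives $\Div(\tilde \theta_i p) = \nabla \tilde \theta_i \cdot p + \tilde \theta_i \Div p$, and since $\tilde \theta_i \in W^{1,\infty}(\Omega)$ both $\tilde \theta_i$ and $\nabla \tilde \theta_i$ are in $L^\infty$; so each summand is controlled in $L^2$. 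This also confirms $\theta_i p \in \Hzdiv(\Omega)^m$, i.e.\ the operator maps into $V$.

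The identity $I = \sum_{i=1}^M \theta_i$ is a direct pointwise consequence of property~(ii) of the scalar partition $\tilde \theta_i$, applied to $p(x)$ for almost every $x$. For the set identity $K = \sum_i \theta_i K$, the inclusion $K \subset \sum_i \theta_i K$ is immediate using $p = \sum_i \theta_i p$ and $\theta_i p \in \theta_i K$. For the reverse inclusion, given $p^i = \theta_i q^i$ with $q^i \in K$, I would show $\sum_i p^i \in K$ by checking the pointwise Frobenius constraint: by the triangle inequality and the constraint $|q^i(x)|_F \le \lambda$,
\[
\Big|\sum_{i=1}^M \tilde \theta_i(x) q^i(x)\Big|_F
\le \sum_{i=1}^M \tilde \theta_i(x) |q^i(x)|_F
\le \lambda \sum_{i=1}^M \tilde \theta_i(x) = \lambda,
\]
which, combined with $\sum_i p^i \in V$ (by the boundedness part above), yields $\sum_i p^i \in K$.

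The only mildly delicate step is boundedness into $\Hzdiv(\Omega)^m$, which hinges on the product rule and the assumption $\tilde \theta_i \in W^{1,\infty}(\Omega)$; everything else is pointwise and essentially bookkeeping. I do not expect any genuine obstacle here.
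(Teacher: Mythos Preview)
Your proposal is correct and follows essentially the same route as the paper: linearity from the pointwise definition, boundedness in $\Hzdiv(\Omega)^m$ via the product rule $\Div(\tilde\theta_i p)=\nabla\tilde\theta_i\cdot p+\tilde\theta_i\Div p$ together with $\tilde\theta_i\in W^{1,\infty}(\Omega)$, the identity $\sum_i\theta_i=I$ from the scalar partition property, and the set identity $K=\sum_i\theta_iK$ by the pointwise triangle-inequality argument on $|\cdot|_F$. Your write-up is in fact slightly more explicit than the paper's (you spell out $p^i=\theta_i q^i$ and note that $\sum_i p^i\in V$), but there is no substantive difference.
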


\section{Algorithm}\label{sec:algorithm}

Let us first introduce our notion of approximate minimization.

\begin{definition} \label{def:approxmin}
  For $q \in V$, $\rho \in (0,1]$ we call
  \begin{align} \label{eq:approxmin}
    \argmin_{p\in K}^{\rho,q} \D(p)
    := \Big\{\tilde p \in K: \D(q) - \D(\tilde p) \ge \rho(\D(q) - \D(\hat p)), \hat p \in \argmin_{p\in K} \D(p) \Big\}
  \end{align}
  the set of $\rho$-approximate minimizers of $\D$ on $K$ with respect to $q$.
\end{definition}

The condition in \eqref{eq:approxmin} means that the improvement in functional value needs to be at least within a constant factor of the remaining difference in functional value towards a true minimizer.
For $\rho = 1$ and arbitrary $q \in V$ this reduces to the usual notion of minimizers.

We present the decomposition procedures in \cref{alg:ddpar,alg:ddseq}.
\begin{algorithm}
  \caption{Parallel decomposition} \label{alg:ddpar}
  \begin{algorithmic}[1]
    \Require{$p^0\in K$ and $\sigma \in (0,\frac{1}{M}]$, $\rho \in (0, 1]$}
    \For{$n=0,1,2,\ldots$}
      \For{$i=1,\ldots,M$}
        \State{$\tilde{v}_i^n \in \argmin_{v_i \in \theta_i K}^{\rho,\theta_i p^n} \D\big(p^n + (v_i-\theta_i p^n)\big)$}
      \EndFor
      \State{$p^{n+1} = p^n + \sum_{i=1}^M \sigma (\tilde{v}_i^n - \theta_i p^n)$}
    \EndFor
  \end{algorithmic}
\end{algorithm}

\begin{algorithm}
  \caption{Sequential decomposition} \label{alg:ddseq}
  \begin{algorithmic}[1]
    \Require{$p^0\in K$ and $\sigma \in (0,1]$, $\rho \in (0, 1]$}
    \For{$n=0,1,2,\ldots$}
      \State{$p_0^n = p^n$}
      \For{$i=1,\ldots,M$}
        \State{$\tilde{v}_i^n \in \argmin_{v_i \in \theta_i K}^{\rho,\theta_i p^n} \D\big(p_{i-1}^n + (v_i-\theta_i p^n)\big)$}
        \State{$p_i^n = p_{i-1}^n + \sigma(\tilde v_i^n - \theta_i p^n)$}
      \EndFor
      \State{$p^{n+1} = p_{M}^n$}
    \EndFor
  \end{algorithmic}
\end{algorithm}

To treat both algorithms in a similar way, we use the convention
$p_{i-1}^n := p^n$ for
\cref{alg:ddpar} independent of $i \in \{1, \dotsc, M\}$.
Having defined $\tilde v_i^n \in \theta_i K$ for $i \in \{1, \dotsc, M\}$ we
also set $\tilde p_i^n := p_{i-1}^n + (\tilde v_i^n - \theta_i p^n) \in K$.

We observe that in each step $n \in \N_0$, $i \in \{1, \dotsc, M\}$ of
\cref{alg:ddpar,alg:ddseq} the subproblem
\begin{equation} \label{eq:algpdd-subproblem}
  \inf_{v_i \in \theta_i K} \tfrac{1}{2} \|\LambdaStar v_i - f_i^n\|_{B^{-1}}^2
\end{equation}
with $f_i^n = T^*g - \LambdaStar (p_{i-1}^n - \theta_i p^n)$ needs to be solved approximately.

For a locally acting operator $B^{-1}$ and suitable $\theta_i$ these problems may be solved on $\supp(\theta_i) \subset \overline{\Omega}$.
If $B^{-1}$ on the other hand is global then in order to avoid having to solve
the subproblems globally on $\Omega$ a surrogate technique will be introduced
in \cref{sec:surrogate} below.

\section{Convergence Analysis}\label{sec:convergence-analysis}

In this section we analyse \cref{alg:ddpar,alg:ddseq} with respect to their convergence. In particular we first show monotonicity of the energy with respect to the iterates followed by the main results. Subsequently we collect useful statements which finally enable us to prove our main result at the end of this section.
 
\subsection{Monotonicity}
We first establish monotonicity of the iterates.

\begin{lemma} \label{lem:stepdecrease}
  The iterates $(p^n)_{n\in\N}$ of \cref{alg:ddpar,alg:ddseq} with corresponding constraints on $\sigma$ satisfy
  \begin{align*}
    \D(p^n) - \D(p^{n+1})
    &\ge \rho\sigma \sum_{i=1}^M \big( \D(p_i^n) - \D(\hat p_i^n) \big)
    \ge 0
  \end{align*}
  where $\hat p_i^n = p_{i-1}^n + (\hat v_i^n - \theta_i p^n)$, $\hat v_i^n \in
  \argmin_{v_i \in \theta_i K} \D(p_{i-1}^n + (v_i - \theta_i p^n))$
  denotes any exact minimizer in the $i$-th substep of the corresponding
  algorithm.
  The non-negative sequence $(\D(p^n))_{n\in\N}$ is in particular monotonically decreasing
  and thus convergent.

  \begin{proof}
    The update step for $p^{n+1}$ in the parallel case of \cref{alg:ddpar} is given as
    \begin{align*}
      p^{n+1}
      = p^n + \sigma \sum_{i=1}^M (\tilde v_i^n - \theta_i p^n) 
      = (1- \sigma M) p^n  + \sigma \sum_{i=1}^M \big( p^n + (\tilde v_i^n - \theta_i p^n) \big).
    \end{align*}
    We denote $\tilde p_i^n = p_{i-1}^n + (\tilde v_i^n - \theta_ip^n) = p^n + (\tilde v_i^n - \theta_i p^n)$.
    Since $\sigma \in (0, \frac{1}{M}]$, convexity of $\D$ yields
    \begin{align*}
      \D(p^{n+1}) \le (1-\sigma M) \D(p^n) + \sigma \sum_{i=1}^M \D\big( \tilde p_i^n \big).
    \end{align*}
    We use this and the definition of $\tilde v_i^n$ to estimate
    \begin{align*}
      \D(p^n) - \D(p^{n+1})
      &\ge \sigma M \D(p^n) - \sigma \sum_{i=1}^M \D\big( \tilde p_i^n \big)
      = \sigma \sum_{i=1}^M \Big( \D(p^n) - \D\big( \tilde p_i^n \big) \Big) \\
      &\ge \rho \sigma \sum_{i=1}^M \Big( \D(p^n) - \D\big( \hat p_i^n \big) \Big),
    \end{align*}
    where we denoted $\hat p_i^n = p_{i-1}^n + (\hat v_i^n - \theta_i p^n) = p^n + (\hat v_i^n - \theta_i p^n)$
    in the last inequality.

    For the sequential case of \cref{alg:ddseq} we have similarly
    \begin{align*}
      p_i^n
      &= p_{i-1}^n + \sigma(\tilde v_i^n - \theta_i p^n) 
      = (1 - \sigma) p_{i-1}^n + \sigma (p_{i-1}^n - (\tilde v_i^n - \theta_i p^n)) \\
      &= (1 - \sigma) p_{i-1}^n + \sigma \tilde p_i^n
    \end{align*}
    and thus $\D(p_i^n) \le (1-\sigma) \D(p_{i-1}^n) + \sigma \D(\tilde p_i^n)$.
    Rewriting we see that
    \begin{equation} \label{eq:dd-energy-difference-stepsize}
      \begin{aligned}
        \D(p_{i-1}^n) - \D(p_i^n)
        \ge \sigma (\D(p_{i-1}^n) - \D(\tilde p_i^n)) 
        \ge \rho\sigma (\D(p_{i-1}^n) - \D(\hat p_i^n)),
      \end{aligned}
    \end{equation}
    where we again used the definition of $\tilde v_i^n$ in the second inequality.
    A telescope sum over $i = 1, \dotsc, M$ then yields
    \begin{align*}
      \D(p^n) - \D(p^{n+1})
      =\sum_{i=1}^M (\D(p_{i-1}^n) - \D(p_{i}^n)) 
      \ge \rho\sigma \sum_{i=1}^M(\D(p_{i-1}^n) - \D(\hat p_i^n)).
\end{align*}
  \end{proof}
\end{lemma}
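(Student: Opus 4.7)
The plan is to handle the parallel and sequential cases separately, relying in both cases on two ingredients: the convexity of $\D$ (which holds since $\D$ is a quadratic in $\LambdaStar p$ composed with $\|\cdot\|_{B^{-1}}^2$ and $B^{-1}$ is positive by coercivity of $a_B$), and the $\rho$-approximate minimization property from \cref{def:approxmin} applied to each subproblem with $q = \theta_i p^n$.

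For \cref{alg:ddpar}, I would first rewrite the update as a convex combination. Setting $\tilde p_i^n := p^n + (\tilde v_i^n - \theta_i p^n)$ and using $I = \sum_i \theta_i$, one has
\[
  p^{n+1} = (1-\sigma M)\, p^n + \sigma \sum_{i=1}^M \tilde p_i^n,
\]
which is a genuine convex combination precisely because $\sigma \in (0, 1/M]$. Convexity of $\D$ then gives $\D(p^{n+1}) \le (1-\sigma M)\D(p^n) + \sigma \sum_i \D(\tilde p_i^n)$, which rearranges to $\D(p^n) - \D(p^{n+1}) \ge \sigma \sum_i (\D(p^n) - \D(\tilde p_i^n))$. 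The $\rho$-approximate property applied to the $i$-th subproblem (noting that $p_{i-1}^n = p^n$ under the stated convention, so that $q = \theta_i p^n$ gives the base value $\D(p^n)$) yields $\D(p^n) - \D(\tilde p_i^n) \ge \rho(\D(p^n) - \D(\hat p_i^n))$, and combining finishes the parallel case.

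For \cref{alg:ddseq}, I would apply the same convexity trick substep-by-substep: the update $p_i^n = (1-\sigma)p_{i-1}^n + \sigma \tilde p_i^n$ is a convex combination because $\sigma \in (0,1]$, so convexity yields $\D(p_{i-1}^n) - \D(p_i^n) \ge \sigma(\D(p_{i-1}^n) - \D(\tilde p_i^n)) \ge \rho \sigma (\D(p_{i-1}^n) - \D(\hat p_i^n))$ after invoking \cref{def:approxmin} (whose defining inequality, for $q = \theta_i p^n$, reads $\D(p_{i-1}^n) - \D(\tilde p_i^n) \ge \rho(\D(p_{i-1}^n) - \D(\hat p_i^n))$ since plugging $v_i = \theta_i p^n$ into $p_{i-1}^n + (v_i - \theta_i p^n)$ returns $p_{i-1}^n$). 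Telescoping over $i = 1,\dotsc,M$ and using $p_0^n = p^n$, $p_M^n = p^{n+1}$ produces the claimed bound.

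Finally, for non-negativity, I would note that $\theta_i p^n \in \theta_i K$ is feasible in the $i$-th subproblem, and produces the value $\D(p_{i-1}^n)$; hence $\D(\hat p_i^n) \le \D(p_{i-1}^n)$, so every summand on the right is $\ge 0$. Monotonicity of $(\D(p^n))_n$ together with $\D \ge 0$ then implies convergence. I do not expect a genuine obstacle here; the only conceptual point is recognizing that the step-size restrictions $\sigma \le 1/M$ (parallel) and $\sigma \le 1$ (sequential) are exactly what is needed to reinterpret each update as a convex combination so that convexity of $\D$ applies directly.
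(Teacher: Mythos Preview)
Your proposal is correct and follows essentially the same approach as the paper: rewrite each update as a convex combination (using the respective constraint on $\sigma$), apply convexity of $\D$, invoke the $\rho$-approximate minimization property with $q=\theta_i p^n$, and in the sequential case telescope over $i$. Your explicit justification of non-negativity via feasibility of $\theta_i p^n$ is a small addition that the paper leaves implicit.
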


In particular, \cref{lem:stepdecrease} shows monotonicity of energies, i.e.\
$\D(p^n) \ge \D(p^{n+1})$.
Because of the coercivity assumption \eqref{eq:coerciveness},
the set of iterates $\{p^n: n \in \N_0\} \subset V$ is therefore bounded.
We thus denote for some fixed minimizer $\hat p \in K$ of $\D$ the finite radius
\begin{align} \label{eq:dd_iterates_radius}
  R_{\hat p} := \sup \{ \|p - \hat p\|_V : p \in K, \D(p) \le \D(p^0) \} < \infty.
\end{align}

\subsection{Main Results}
Now we are able to state our main convergence result providing a convergence rate of the proposed algorithms to a minimizer of the original (global) problem.

\begin{theorem} \label{thm:ddrate}
  Let $(p^n)_{n\in\N_0}$ be the iterates from either one of \cref{alg:ddseq,alg:ddpar} and
  let $\hat p \in K$ denote a minimizer of $\D$.
  \Cref{alg:ddpar,alg:ddseq} converge in the sense that $\D(p^n) \to \D(\hat p)$.
  More specifically,
  \begin{align*}
    \D(p^n) - \D(\hat p)
    &\le \begin{cases}
      (1 - \tfrac{\rho\sigma}{2\alpha})^n \big(\D(p^{0}) - \D(\hat p)\big) & \text{if $n \le n_0$} \\
      \tfrac{2\Phi^2}{\rho \sigma}\alpha^2 (n - n_0 +1)^{-1} & \text{if $n \ge n_0$,}
    \end{cases}
  \end{align*}
  where $\alpha := 1 + M\sigma\sqrt{2-\rho+2\sqrt{1-\rho}}$ for \cref{alg:ddseq}
  and $\alpha := 1$ for \cref{alg:ddpar},
  $\Phi := \sqrt{\|B^{-1}\|} \NormLambda C_\theta R_{\hat p}$ with $C_\theta := \left(\sum_{i=1}^M \|\theta_i\|^2\right)^{\frac{1}{2}}$ and
  $n_0 := \min \{n \in \N_0 : \D(p^{n}) - \D(\hat p) < \Phi^2 \alpha\}$.
  \begin{proof}
  The proof is deferred to \cref{sec:proof}.
  \end{proof}
\end{theorem}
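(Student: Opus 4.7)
I would aim to establish, for both \cref{alg:ddpar} and \cref{alg:ddseq}, the single master recursion
\begin{equation*}
  \alpha(e_n - e_{n+1}) \;\ge\; \rho\sigma\bigl(t\, e_n - \tfrac{1}{2}t^2\alpha\Phi^2\bigr)
  \quad\text{for every } t \in [0,1],
\end{equation*}
where $e_n := \D(p^n) - \D(\hat p)$. Once this is in hand, optimizing the right-hand side splits into two regimes. If $e_n \ge \Phi^2\alpha$, the choice $t=1$ gives $e_{n+1} \le (1 - \tfrac{\rho\sigma}{2\alpha})e_n$. If $e_n < \Phi^2\alpha$, the choice $t = e_n/(\alpha\Phi^2) \in [0,1]$ gives $e_n - e_{n+1} \ge \tfrac{\rho\sigma}{2\alpha^2\Phi^2}e_n^2$, which yields $\tfrac{1}{e_{n+1}} - \tfrac{1}{e_n} \ge \tfrac{\rho\sigma}{2\alpha^2\Phi^2}$. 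Summing from $n_0$ and using $e_{n_0} < \Phi^2\alpha$ to absorb the starting term then produces the stated $\tfrac{2\Phi^2\alpha^2}{\rho\sigma(n-n_0+1)}$ bound.

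To derive the master recursion I would feed the feasible family $v_i(t) := (1-t)\theta_i p^n + t\theta_i \hat p \in \theta_i K$ into \cref{lem:stepdecrease}, which is admissible because $\theta_i K$ is convex and contains both $\theta_i p^n$ and $\theta_i \hat p$. This gives
\begin{equation*}
  e_n - e_{n+1} \ge \rho\sigma \sum_{i=1}^M \bigl(\D(p_{i-1}^n) - \D(p_{i-1}^n + t\theta_i(\hat p - p^n))\bigr).
\end{equation*}
Since $\D$ is a convex quadratic, the identity $\D(p+h) - \D(p) = \langle B^{-1}(\LambdaStar p - T^*g), \LambdaStar h\rangle_W + \tfrac{1}{2}\|\LambdaStar h\|_{B^{-1}}^2$ lets me expand each summand. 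After summation, the partition of unity $I = \sum_i \theta_i$ telescopes the main linear contribution, and $\D(\hat p) - \D(p^n) = -e_n$ rewrites it as $te_n + \tfrac{t}{2}\|\LambdaStar(\hat p - p^n)\|_{B^{-1}}^2$. The quadratic piece $\tfrac{t^2}{2}\sum_i\|\LambdaStar\theta_i(\hat p-p^n)\|_{B^{-1}}^2$ is at most $\tfrac{t^2\Phi^2}{2}$ via $\|B^{-1}\|$, $\NormLambda$, $\sum_i\|\theta_i\|^2 = C_\theta^2$, and $\|\hat p - p^n\|_V \le R_{\hat p}$ from \eqref{eq:dd_iterates_radius}. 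In \cref{alg:ddpar}, the convention $p_{i-1}^n = p^n$ kills all further terms and the master recursion with $\alpha = 1$ drops out.

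The difficulty in \cref{alg:ddseq} is an extra cross term $X := \sum_i \langle B^{-1}\LambdaStar(p_{i-1}^n - p^n), \LambdaStar\theta_i(\hat p - p^n)\rangle_W$ produced by the drift of the running iterate during a sweep, and absorbing $|X|$ with the sharp constant $\alpha = 1 + M\sigma\sqrt{2-\rho+2\sqrt{1-\rho}}$ is what I expect to be the crux. The route I have in mind is as follows. Strong convexity of $\D$ in the seminorm $\|\LambdaStar\cdot\|_{B^{-1}}$ combined with first-order optimality of $\hat p_j^n$ on the convex set $p_{j-1}^n + (\theta_j K - \theta_j p^n)$ gives both $\|\LambdaStar(p_{j-1}^n - \hat p_j^n)\|_{B^{-1}}^2 \le 2\Delta_j$ and $\|\LambdaStar(\tilde p_j^n - \hat p_j^n)\|_{B^{-1}}^2 \le 2(\D(\tilde p_j^n) - \D(\hat p_j^n))$, where $\Delta_j := \D(p_{j-1}^n) - \D(\hat p_j^n)$. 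Combining with the $\rho$-approximate minimization bound $\D(\tilde p_j^n) - \D(\hat p_j^n) \le (1-\rho)\Delta_j$ and the triangle inequality yields $\|\LambdaStar(\tilde v_j^n - \theta_j p^n)\|_{B^{-1}} \le (1+\sqrt{1-\rho})\sqrt{2\Delta_j}$, which is precisely where the factor $\sqrt{2-\rho+2\sqrt{1-\rho}}$ originates. Telescoping $p_{i-1}^n - p^n = \sigma\sum_{j<i}(\tilde v_j^n - \theta_j p^n)$, applying Cauchy--Schwarz to the double sum over $j<i$, invoking Young's inequality at the weight $\eta = t/\sqrt{2}$, and using \cref{lem:stepdecrease} in the form $\sum_j \Delta_j \le (e_n - e_{n+1})/(\rho\sigma)$, should land exactly at $\rho\sigma t|X| \le (\alpha-1)(e_n - e_{n+1}) + \tfrac{\rho\sigma t^2(\alpha-1)\Phi^2}{2}$, which rearranges into the master recursion. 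Matching the constant $\alpha$ exactly, through the interlocking choices of $t$, $\eta$, and the application of Cauchy--Schwarz, is what I expect to carry most of the technical weight.
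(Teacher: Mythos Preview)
Your proposal is correct and follows essentially the same route as the paper: the competitor $v_i(t)=(1-t)\theta_i p^n+t\theta_i\hat p$ is the paper's $\mu$-family in \cref{prop:ratestep}, your triangle-inequality step-distance bound is exactly \cref{lem:dd-step-distance-energy}, and the two regimes of your master recursion are the two branches of \cref{lem:estsplit} combined with \cref{lem:seqrate}. The only difference is organizational---you start from \cref{lem:stepdecrease} and optimize $t$ directly in one master inequality, whereas the paper starts from convexity $e_n\le\langle\D'(p^n),p^n-\hat p\rangle$, passes through the intermediate \cref{prop:ratestep}, and then inverts via \cref{lem:estsplit}; the ingredients and the resulting constants coincide.
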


The difference in the predual energy can be related to the $L^2$-error of the primal variable in the following way:

\begin{proposition} \label{prop:dual-energy-strong-convexity}
  Let $\hat p \in V$ be a minimizer of \eqref{eq:model-motivation-predual}
  and $\hat u \in W$ be the minimizer of \eqref{eq:model-motivation}.
  Then
  for all $p \in V$ and $u := B^{-1}(-\LambdaStar p + T^* g)$ we have
  \begin{align*}
    \tfrac{c_B}{2} \|u - \hat u\|_W^2
    \le \D(p) - \D(\hat p).
  \end{align*}

  \begin{proof}
    Due to coercivity of $a_B$ we have for $v \in W$
    \begin{align*}
      c_B \|B^{-1}v\|_W^2
      \le a_B(B^{-1}v,B^{-1}v)
      &= \big\<(T^*T + \beta I) B^{-1}v, B^{-1}v\big\>_W \\
      &= \<v, B^{-1}v\>_W
      = \|v\|_{B^{-1}}^2.
    \end{align*}
    By expanding the quadratic functional $\D$ at $\hat p$
    and using optimality of $\hat p$, i.e.\ $\<\D'(\hat p), p - \hat p\> \ge 0$,
    we then see that
    \begin{align*}
      \D(p) - \D(\hat p)
      &= \<\D'(\hat p), p - \hat p\>_V
        + \tfrac{1}{2} \<\LambdaOp B^{-1} \LambdaStar (p - \hat p), p - \hat p\>_V \\
      &\ge \tfrac{1}{2}\|\LambdaStar (p - \hat p)\|_{B^{-1}}^2
      \ge \tfrac{c_B}{2} \|B^{-1} \LambdaStar (p - \hat p)\|_W^2
      = \tfrac{c_B}{2} \|u - \hat u\|_W^2,
    \end{align*}
    since due to \cref{prop:duality} $\hat u$ is given by $\hat u =
    B^{-1}(-\LambdaStar \hat p + T^* g)$.
  \end{proof}
\end{proposition}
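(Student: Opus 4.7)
The plan is to exploit that $\D$ is a quadratic functional by expanding it exactly around $\hat p$, using first-order optimality of $\hat p$ to discard the linear part, and then translating the resulting $B^{-1}$-seminorm bound into a $W$-norm of the primal error via coercivity of $a_B$.

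Concretely, since $\D(p) = \tfrac12 \|\LambdaStar p - T^* g\|_{B^{-1}}^2$ is a quadratic polynomial in $p$, its Taylor expansion at $\hat p$ is exact:
\[
  \D(p) - \D(\hat p)
  = \<\D'(\hat p), p - \hat p\>_V
    + \tfrac{1}{2}\<\LambdaOp B^{-1} \LambdaStar (p - \hat p), p - \hat p\>_V.
\]
Optimality of $\hat p$ for the constrained problem \eqref{eq:model-motivation-predual} gives the variational inequality $\<\D'(\hat p), p - \hat p\>_V \ge 0$ for every $p \in K$, so the linear term can be dropped and the remaining quadratic rewrites as $\tfrac{1}{2}\|\LambdaStar(p - \hat p)\|_{B^{-1}}^2$.

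To convert this $B^{-1}$-seminorm into a $W$-norm on the primal side, set $v := \LambdaStar(p - \hat p)$ and $w := B^{-1} v$; coercivity of $a_B$ then yields
\[
  c_B \|w\|_W^2 \le a_B(w, w) = \<B w, w\>_W = \<v, B^{-1} v\>_W = \|v\|_{B^{-1}}^2,
\]
i.e.\ $\tfrac{1}{2}\|\LambdaStar(p - \hat p)\|_{B^{-1}}^2 \ge \tfrac{c_B}{2} \|B^{-1} \LambdaStar (p - \hat p)\|_W^2$. Invertibility of $B$, which is needed for $\|\cdot\|_{B^{-1}}$ to make sense at all, comes from the same coercivity assumption recalled after \cref{prop:duality}.

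Finally, one identifies $\|B^{-1}\LambdaStar(p - \hat p)\|_W$ with $\|u - \hat u\|_W$: by \cref{prop:duality} both $u$ and $\hat u$ are images of $p$, $\hat p$ under the affine map $q \mapsto B^{-1}(-\LambdaStar q + T^* g)$, so $u - \hat u = -B^{-1}\LambdaStar(p - \hat p)$ and the two norms agree. Chaining the three steps gives the claim. The main obstacle is essentially bookkeeping — correctly recognising $\LambdaOp B^{-1}\LambdaStar$ as the Hessian of a $B^{-1}$-weighted squared norm and keeping the $V$- and $W$-duality pairings straight; once this is in place, the estimate is a short sequence of Hilbert-space inequalities.
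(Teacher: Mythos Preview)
Your proposal is correct and follows the paper's argument essentially verbatim: exact quadratic expansion of $\D$ at $\hat p$, discard the linear term via the first-order optimality condition, then use coercivity of $a_B$ to bound $\|\LambdaStar(p-\hat p)\|_{B^{-1}}^2$ below by $c_B\|B^{-1}\LambdaStar(p-\hat p)\|_W^2$ and identify the latter with $\|u-\hat u\|_W^2$ via the affine primal–dual relation from \cref{prop:duality}. The only difference is the order of presentation (you expand first and bound second, the paper states the coercivity estimate up front), and your explicit remark that the variational inequality requires $p\in K$ is if anything slightly more careful than the paper's phrasing.
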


\subsection{Collection of Useful Results}
Here we collect some statements which we use to proof \cref{thm:ddrate}.
\begin{definition}
  For $p, q \in V$ we introduce the notation
  \begin{align*}
    \<p, q\>_* &:= \<\LambdaOp B^{-1} \LambdaStar p, q\>_{V},&
    \|p\|_* &:= \sqrt{\<p,p\>_*}.
  \end{align*}
\end{definition}

Note that we have $\|p\|_*^2 = \|\LambdaStar p\|_{B^{-1}}^2$ in particular and
that $\<\cdot, \cdot\>_*$ and $\|\cdot\|_*$ are not necessarily positive definite.

\begin{lemma} \label{Lemma22}
  Let $\D': V \to V$ be the Fréchet derivative of $\D$.
  For any $p,q,r\in V$ we have
  \begin{enumerate}[(i)]
    \item
      $\D(p)-\D(q) = \<\D'(q), p - q\>_{V} + \tfrac{1}{2}\|p-q\|_*^2$,
    \item
      $\< \D'(p) - \D'(q), r \>_{V} = \<p - q, r\>_*$,
  \end{enumerate}
\end{lemma}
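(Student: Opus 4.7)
The plan is to use the quadratic form of $\D(p) = \tfrac{1}{2}\|\LambdaStar p - T^*g\|_{B^{-1}}^2$ and expand it at $q$. Setting $F(q) := \LambdaStar q - T^*g$ and using the bilinearity of $\langle\argdot, B^{-1}\argdot\rangle_W$ together with the self-adjointness of $B^{-1}$ (which follows from $B = \alpha_2 T^*T + \beta I$ being self-adjoint on $W$), I would write
\begin{align*}
  \|F(p)\|_{B^{-1}}^2
  &= \|F(q) + \LambdaStar(p-q)\|_{B^{-1}}^2 \\
  &= \|F(q)\|_{B^{-1}}^2 + 2\<B^{-1}F(q), \LambdaStar(p-q)\>_W + \|\LambdaStar(p-q)\|_{B^{-1}}^2.
\end{align*}
Subtracting $\|F(q)\|_{B^{-1}}^2$ and halving yields
\begin{align*}
  \D(p) - \D(q) = \<B^{-1}F(q), \LambdaStar(p-q)\>_W + \tfrac{1}{2}\|p-q\|_*^2,
\end{align*}
where I have invoked the definition $\|p-q\|_*^2 = \|\LambdaStar(p-q)\|_{B^{-1}}^2$ in the remainder term.

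For (i), I would move $\LambdaStar$ to the right-hand side of the $W$-inner product via its adjoint, rewriting the linear term as $\<\LambdaOp B^{-1} F(q), p - q\>_V$. Since the quadratic remainder is bounded by $\tfrac{1}{2}\|B^{-1}\|\,\NormLambda^2\|p-q\|_V^2$ and hence is $o(\|p-q\|_V)$ as $p \to q$, we identify $\D'(q) = \LambdaOp B^{-1}(\LambdaStar q - T^*g)$ as the Fréchet derivative at $q$. Substituting this back into the above expansion gives (i) directly.

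For (ii), I would simply observe that the formula for $\D'$ just obtained is affine in its argument, so $\D'(p) - \D'(q) = \LambdaOp B^{-1} \LambdaStar(p - q)$. Pairing with $r \in V$ in the duality on $V$ and applying the definition $\<p-q, r\>_* = \<\LambdaOp B^{-1}\LambdaStar(p-q), r\>_V$ yields the claim immediately.

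The computation is essentially bookkeeping, so I do not expect any genuine obstacle. The only subtle point is the symmetry of the bilinear form $\langle\argdot, B^{-1}\argdot\rangle_W$, which permits the mixed term in the expansion of the norm squared to appear with the coefficient $2$ and which relies on $B^{-1}$ being self-adjoint; this in turn is inherited from $B$ since both $T^*T$ and $I$ are self-adjoint on $W$. Beyond this, careful tracking of the pairings and adjoints suffices.
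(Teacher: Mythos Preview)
Your proof is correct and follows essentially the same route as the paper's: both expand the quadratic functional $\D$ at $q$ and read off the identities from the resulting Taylor expansion, then compute $\D'(p)-\D'(q)$ directly from the explicit formula $\D'(q)=\LambdaOp B^{-1}(\LambdaStar q - T^*g)$. Your version is somewhat more explicit than the paper's (you justify the self-adjointness of $B^{-1}$ and verify the Fr\'echet derivative via the $o(\|p-q\|_V)$ remainder), whereas the paper simply states ``expand the quadratic functional $\D$ at $q$'' and writes down the result.
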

\begin{proof}
  \begin{enumerate}[(i)]
    \item
      We expand the quadratic functional $\D$ at $q$ to obtain
      \begin{align*}
        \D(p)
        &= \D(q) + \<\D'(q), p - q\>_{V} + \tfrac{1}{2}\<\LambdaOp B^{-1} \LambdaStar
        (p - q), p - q\>_{V} \\
        &= \D(q) + \<\D'(q), p - q\>_{V} + \tfrac{1}{2}\|p - q\|_*^2
      \end{align*}
    \item
     We see directly
      \begin{align*}
        \<\D'(p) - \D'(q), r\>_{V}
	&= \<\LambdaOp B^{-1}(\LambdaStar p - T^* g) - \LambdaOp B^{-1}(\LambdaStar q -
        T^* g), r\>_{V} \\
        &= \<\LambdaOp B^{-1} \LambdaStar (p-q), r\>_{V} 
	= \<p-q,r\>_*.
      \end{align*}~
  \end{enumerate}
\end{proof}

We note that \cref{Lemma22} actually holds true for any quadratic
functional.

\begin{lemma} \label{lem:seqrate}
  Let $c > 0$ and $(a_k)_{k \in \N_0} \subset \R^+$ be a sequence such that for all $k \in \N_0$:
  \begin{align*}
    a_k - a_{k+1} \ge c a_k^2.
  \end{align*}
  Then $\lim_{k\to\infty} a_k \to 0$ with rate
  \begin{align*}
    0 < a_k < \frac{1}{ck + \frac{1}{a_0}} < \frac{1}{ck}
  \end{align*}
  for all $k \in \N$.
  \begin{proof}
    We proceed similar to \cite{both2022rate}.
    Since the iterates $a_k$, $k\in \N_0$ are monotonically decreasing, we can write for $k \in \N_0$:
    \begin{align*}
      \frac{1}{a_{k+1}} - \frac{1}{a_k}
      = \frac{a_k - a_{k+1}}{a_{k+1}a_k}
      \ge \frac{c a_k}{a_{k+1}}
      > c
    \end{align*}
    and use it to reduce the telescope sum for $k > 0$:
    \begin{align*}
      \frac{1}{a_k}
      = \sum_{j=0}^{k-1} \Big( \frac{1}{a_{j+1}} - \frac{1}{a_j} \Big) + \frac{1}{a_0}
      > ck + \frac{1}{a_0}.
    \end{align*}
    Inverting the inequality yields the statement.
  \end{proof}
\end{lemma}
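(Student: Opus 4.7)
The plan is to convert the quadratic recursive inequality on $(a_k)$ into a telescoping inequality for the reciprocal sequence $(1/a_k)$, a trick standard for recurrences of the form $a_k - a_{k+1} \ge c a_k^2$ (this is essentially how one analyses the $O(1/k)$ rate of gradient descent on smooth convex functions). First I would note that since $c a_k^2 > 0$, the hypothesis implies $a_{k+1} < a_k$, so $(a_k)$ is strictly monotonically decreasing; together with positivity this already guarantees that it has a finite nonnegative limit.

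Next, I would divide the inequality $a_k - a_{k+1} \ge c a_k^2$ by the positive quantity $a_k a_{k+1}$, yielding
\begin{align*}
  \tfrac{1}{a_{k+1}} - \tfrac{1}{a_k} \ge c \cdot \tfrac{a_k}{a_{k+1}} \ge c,
\end{align*}
where the second inequality uses $a_k \ge a_{k+1}$ from the previous step. This is the crucial step that turns the nonlinear recurrence into an additive lower bound on the increments of $1/a_k$.

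Then I would sum these increments from $j = 0$ to $k-1$ as a telescoping sum, obtaining
\begin{align*}
  \tfrac{1}{a_k} - \tfrac{1}{a_0} \ge ck,
\end{align*}
which upon inversion gives $a_k \le (ck + 1/a_0)^{-1} < 1/(ck)$ for all $k \in \N$. To get the strict inequality $a_k < (ck+1/a_0)^{-1}$ claimed in the statement, I would observe that the inequality $a_k/a_{k+1} \ge 1$ is strict as long as $a_k > 0$ (since $a_{k+1} = a_k - c a_k^2 \cdot \text{(something)} < a_k$), so at least one of the summed increments strictly exceeds $c$, yielding strict inequality in the telescoped bound. The convergence $a_k \to 0$ then follows immediately from the rate.

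I do not expect any real obstacle here; the only subtlety is making sure the transition from $a_k/a_{k+1} \ge 1$ to a strict inequality is handled cleanly so the bound matches exactly what is stated, rather than a weak version with $\le$. Positivity of every $a_k$ (needed to divide by $a_k a_{k+1}$ without sign trouble) is directly assumed in the hypothesis $(a_k) \subset \R^+$, so no further justification is required.
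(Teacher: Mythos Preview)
Your proposal is correct and follows essentially the same argument as the paper: divide the recurrence by $a_k a_{k+1}$ to obtain $\tfrac{1}{a_{k+1}} - \tfrac{1}{a_k} \ge c\,\tfrac{a_k}{a_{k+1}} > c$, then telescope and invert. The paper handles the strict inequality in exactly the way you anticipate, via the strict monotonicity $a_{k+1} < a_k$.
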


\begin{lemma} \label{lem:estsplit}
  Let $a, b > 0$, $c, x, y \ge 0$ such that for all $\mu \in (0, 1]$ the inequality
  \begin{align*}
    y \le a \mu + \frac{b}{\mu} x + c \sqrt{x}
  \end{align*}
  holds. Then the following split inequality holds:
  \begin{align} \label{eq:split_inequality}
    y &\le \begin{cases}
      (2b + c \frac{\sqrt{b}}{\sqrt{a}})x & \text{if $x > \frac{a}{b}$}, \\
      (2\sqrt{ab} + c) \sqrt{x} & \text{if $x \le \frac{a}{b}$},
    \end{cases}
    \intertext{or equivalently}
    x &\ge \begin{cases}
      (2b + c \frac{\sqrt{b}}{\sqrt{a}})^{-1} y & \text{if $y > 2a +
      c\frac{\sqrt{a}}{\sqrt{b}}$}, \\
        (2\sqrt{ab} + c)^{-2} y^2 & \text{if $y \le 2a +
        c\frac{\sqrt{a}}{\sqrt{b}}$}.
    \end{cases} \notag
  \end{align}
  \begin{proof}
    If $x > \frac{a}{b}$ we choose $\mu = 1$ to arrive at
    \begin{align*}
      y
      \le a + bx + c\sqrt{x}
      < 2bx + c \sqrt{x}
      \le (2b + c \tfrac{\sqrt{b}}{\sqrt{a}}) x.
    \end{align*}
    Otherwise we minimize the expression by choosing
    $\mu = \tfrac{\sqrt{b}}{\sqrt{a}} \sqrt{x}$ and get
    \begin{align*}
      y
      \le a \tfrac{\sqrt{b}}{\sqrt{a}}\sqrt{x} + b \tfrac{\sqrt{a}}{\sqrt{b}} \sqrt{x} + c \sqrt{x}
      = (2\sqrt{ab} + c)\sqrt{x}.
    \end{align*}
    Both statements together yield the estimate.

    Noting that the right-hand side of estimate \eqref{eq:split_inequality}
    is continuous and monotone
    in $x$, the case distinction can equivalently be written in terms of $y$
    by splitting at $x = \tfrac{a}{b}$, $y = (2b + c \tfrac{\sqrt{b}}{\sqrt{a}})\tfrac{a}{b} = 2a + c\sqrt{a}{b}$.
    Seperately solving the inequalities for $x$ thus yields the equivalent
    representation.
  \end{proof}
\end{lemma}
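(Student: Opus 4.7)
The plan is to view the right-hand side $a\mu + (b/\mu)x + c\sqrt{x}$ as a function of $\mu \in (0,1]$ and optimize by a clever choice of $\mu$ depending on $x$. By elementary calculus, the unconstrained minimizer over $\mu > 0$ of $\mu \mapsto a\mu + (b/\mu)x$ is $\mu^{\star} = \sqrt{bx/a}$, with minimum value $2\sqrt{abx}$. The constraint $\mu \le 1$ becomes binding precisely when $\mu^{\star} > 1$, i.e.\ when $x > a/b$. This suggests splitting the argument into the two regimes $x \le a/b$ and $x > a/b$ appearing in the statement.

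In the regime $x \le a/b$ the optimizer $\mu^{\star}$ lies in $(0,1]$, so I would plug $\mu = \sqrt{bx/a}$ into the hypothesis to obtain $y \le 2\sqrt{abx} + c\sqrt{x} = (2\sqrt{ab} + c)\sqrt{x}$. In the regime $x > a/b$ the objective is strictly decreasing on $(0,\mu^{\star}]$, so the best admissible choice is $\mu = 1$, which gives $y \le a + bx + c\sqrt{x}$. I would then absorb $a$ into $bx$ using $a < bx$ (strict because $x > a/b$), and bound $c\sqrt{x} \le c(\sqrt{b}/\sqrt{a})\,x$ using $\sqrt{x} \ge \sqrt{a/b}$, which is equivalent to $x \ge a/b$ and hence holds here; combining yields $y \le (2b + c\sqrt{b}/\sqrt{a})\,x$.

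For the equivalent reformulation in terms of $x$, I would check that the piecewise bound on $y$ from the first assertion is continuous and strictly increasing in $x$: at the crossover $x = a/b$ both branches agree on the common value $y = (2\sqrt{ab}+c)\sqrt{a/b} = 2a + c\sqrt{a}/\sqrt{b}$. Monotonicity and continuity then let me invert each branch separately to express $x$ in terms of $y$, with the case split happening at $y = 2a + c\sqrt{a}/\sqrt{b}$.

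The whole argument is essentially a constrained minimization over a single real variable, so no deep obstacle is expected. The only point requiring care is the bookkeeping at the junction $x = a/b$ — matching strict versus non-strict inequalities and verifying the monotonicity/continuity needed to make the $y$-to-$x$ inversion well-posed — but these are routine once the two regimes have been identified.
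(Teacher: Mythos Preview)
Your proposal is correct and follows essentially the same route as the paper: the same choice $\mu = \sqrt{bx/a}$ when $x \le a/b$ and $\mu = 1$ when $x > a/b$, the same absorption of $a$ and $c\sqrt{x}$ into multiples of $x$ in the latter regime, and the same continuity-and-monotonicity argument to invert the piecewise bound. The only difference is that you supply the motivating calculus for why these are the right choices of $\mu$, which the paper omits.
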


\begin{lemma} \label{lem:normbounds}
  We have
  \begin{align*}
    \sum_{i=1}^M \|\theta_i p\|_*^2
    &\le \|B^{-1}\|\NormLambda^2 C_\theta^2 \|p\|_V^2.
  \end{align*}
  \begin{proof}
    Application of the Cauchy-Schwarz inequality yields
    \begin{align*}
      \sum_{i=1}^M \|\theta_i p\|_*^2
      &= \sum_{i=1}^M \<\LambdaStar \theta_i p, B^{-1} \LambdaStar \theta_i p\>_W \\
      &\le \sum_{i=1}^M \|B^{-1}\| \NormLambda^2 \|\theta_i\|^2 \|p\|_V^2
      = \|B^{-1}\| \NormLambda^2
        \Big(\sum_{i=1}^M \|\theta_i\|^2\Big) \|p\|_V^2.
    \end{align*}
  \end{proof}
\end{lemma}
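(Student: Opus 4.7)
The plan is straightforward: unfold the definition of $\|\cdot\|_*$ on each summand and then bound the resulting expression by the advertised operator norms. Using the identity $\|\theta_i p\|_*^2 = \|\LambdaStar \theta_i p\|_{B^{-1}}^2$ noted immediately after the introduction of $\|\cdot\|_*$, together with the definition $\|u\|_{B^{-1}}^2 = \<u, B^{-1} u\>_W$ from \cref{prop:duality}, each summand becomes
\[
\|\theta_i p\|_*^2 = \<\LambdaStar \theta_i p,\; B^{-1} \LambdaStar \theta_i p\>_W.
\]
The first key estimate is the operator-norm bound $|\<y, B^{-1} y\>_W| \le \|B^{-1}\|\,\|y\|_W^2$ applied with $y = \LambdaStar \theta_i p$; this is the Cauchy--Schwarz step advertised in the statement.

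Next I would chain the operator-norm inequalities for $\LambdaStar:V\to W$ and $\theta_i:V\to V$ to get
\[
\|\LambdaStar \theta_i p\|_W \le \NormLambda\,\|\theta_i p\|_V \le \NormLambda\,\|\theta_i\|\,\|p\|_V,
\]
which combined with the previous display yields $\|\theta_i p\|_*^2 \le \|B^{-1}\|\,\NormLambda^2\,\|\theta_i\|^2\,\|p\|_V^2$ for each $i$. Summing over $i=1,\dots,M$ and factoring out the $p$-independent quantities produces
\[
\sum_{i=1}^M \|\theta_i p\|_*^2 \;\le\; \|B^{-1}\|\,\NormLambda^2 \Big(\sum_{i=1}^M \|\theta_i\|^2\Big)\|p\|_V^2 \;=\; \|B^{-1}\|\,\NormLambda^2\,C_\theta^2\,\|p\|_V^2
\]
by the definition of $C_\theta$ from \cref{thm:ddrate}, which is exactly the claim.

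I do not anticipate any genuine obstacle: the lemma is essentially bookkeeping, and the only subtle point is that the $B^{-1}$ factor is extracted via the self-adjoint form $\<y, B^{-1} y\>_W$ rather than a naive Cauchy--Schwarz in $W$, but both routes yield the same constant $\|B^{-1}\|$. No assumption beyond boundedness of $\LambdaStar$, $\theta_i$, and $B^{-1}$ is used, which is consistent with the general Hilbert space setting discussed earlier in the paper.
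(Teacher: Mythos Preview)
Your proposal is correct and follows essentially the same route as the paper: unfold $\|\theta_i p\|_*^2 = \langle \LambdaStar \theta_i p, B^{-1}\LambdaStar \theta_i p\rangle_W$, bound by $\|B^{-1}\|\,\NormLambda^2\,\|\theta_i\|^2\,\|p\|_V^2$ via Cauchy--Schwarz and operator norms, and sum over $i$. The only cosmetic difference is that you spell out the intermediate steps $\|\LambdaStar \theta_i p\|_W \le \NormLambda\,\|\theta_i\|\,\|p\|_V$ explicitly, whereas the paper compresses them into a single line.
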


In the following we employ ideas from alternating minimization
\cite{both2022rate} to achieve a convergence rate estimate. 
\begin{lemma} \label{lem:dd-step-distance-energy}
  We may estimate the step distance in terms of the corresponding energy change
  as follows:
  \begin{align*}
    \tfrac{1}{2} \|p_{i-1}^n - p_i^n\|_*^2
    \le \tfrac{\sigma}{\rho}\big(2 - \rho + 2 \sqrt{1-\rho}\big)
      (\D(p_{i-1}^n) - \D(p_i^n)).
  \end{align*}
  \begin{proof}
    Let $\omega > 0$ to be chosen later and denote
    $\tilde p_i^n := p_{i-1}^n + (\tilde v_i^n - \theta_i p^n)$.
    \begin{align*}
      \tfrac{1}{2\sigma^2}\|p^n_{i-1} - p^n_i\|_*^2
      &=\tfrac{1}{2\sigma^2}\|\sigma(\tilde v_i^n - \theta_i p^n)\|_*^2 \\
      &=\tfrac{1}{2}\|p^n_{i-1} - \tilde p^n_i\|_*^2 \\
      &\le \tfrac{1}{2}\Big(
        (1+\omega)\|p^n_{i-1} - \hat p^n_i\|_*^2 +
        (1+\omega^{-1})\|\tilde p^n_i - \hat p^n_i\|_*^2 \Big) \\
      &\le (1+\omega)(\D(p^n_{i-1}) - \D(\hat p^n_i)) +
        (1+\omega^{-1})(\D(\tilde p^n_i) - \D(\hat p^n_i))
        \tag{\cref{Lemma22} (i) and optimality}\\
      &\le \tfrac{1+\omega}{\rho}(\D(p^n_{i-1}) - \D(\tilde p^n_i)) +
        \tfrac{(1+\omega^{-1})(1-\rho)}{\rho} (\D(p^n_{i-1}) - \D(\tilde p^n_i))
        \tag{due to \eqref{eq:approxmin}}\\
      &= \tfrac{1}{\rho} \big(1+\omega + (1+\omega^{-1})(1-\rho)\big) ( \D(p^n_{i-1}) - \D(\tilde p^n_i)) \\
      &\le \tfrac{1}{\sigma\rho} \big(1+\omega + (1+\omega^{-1})(1-\rho)\big) (\D(p^n_{i-1}) - \D(p^n_i))
        \tag{using \eqref{eq:dd-energy-difference-stepsize}}.
    \end{align*}
    Choosing $\omega := \sqrt{1-\rho}$ so as to minimize the expression we
    arrive at
    \begin{align*}
      \tfrac{1}{2\sigma^2}\|p^n_{i-1} - p^n_i\|_*^2
      \le \tfrac{1}{\sigma\rho}\big(2 - \rho + 2\sqrt{1-\rho}\big)
        (\D(p^n_{i-1}) - \D(p^n_i)).
    \end{align*}
  \end{proof}
\end{lemma}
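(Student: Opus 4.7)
The inequality bounds a squared norm by a linear functional-value difference, which suggests a two-step strategy: first, compare the inexact iterate $\tilde p_i^n$ to the exact subproblem minimizer $\hat p_i^n$ via a weighted Young splitting; second, convert the resulting squared norms into energy differences using \cref{Lemma22}(i) together with the first-order optimality of $\hat p_i^n$.

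First I would exploit the sequential update rule to write $p_i^n - p_{i-1}^n = \sigma(\tilde p_i^n - p_{i-1}^n)$, reducing the claim to bounding $\tfrac{\sigma^2}{2}\|\tilde p_i^n - p_{i-1}^n\|_*^2$ by a multiple of $\D(p_{i-1}^n) - \D(p_i^n)$. Then I would introduce $\hat p_i^n$ as a reference point through the weighted triangle inequality
\[
  \|\tilde p_i^n - p_{i-1}^n\|_*^2 \le (1+\omega)\|p_{i-1}^n - \hat p_i^n\|_*^2 + (1+\omega^{-1})\|\tilde p_i^n - \hat p_i^n\|_*^2
\]
with a free parameter $\omega > 0$ to be optimized at the end.

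Both squared norms on the right can be dominated by energy differences: expanding $\D$ around $\hat p_i^n$ via \cref{Lemma22}(i) gives $\tfrac{1}{2}\|q - \hat p_i^n\|_*^2 = \D(q) - \D(\hat p_i^n) - \<\D'(\hat p_i^n), q - \hat p_i^n\>_V$, and since $\hat p_i^n$ minimizes $\D$ over the convex set $p_{i-1}^n - \theta_i p^n + \theta_i K$ (which contains both $p_{i-1}^n$ and $\tilde p_i^n$, using that $p^n \in K$ by induction so $\theta_i p^n \in \theta_i K$), the inner product term is non-negative for $q \in \{p_{i-1}^n, \tilde p_i^n\}$. The definition \eqref{eq:approxmin} of $\rho$-approximate minimization then yields $\D(p_{i-1}^n) - \D(\hat p_i^n) \le \tfrac{1}{\rho}(\D(p_{i-1}^n) - \D(\tilde p_i^n))$ and, by subtraction, $\D(\tilde p_i^n) - \D(\hat p_i^n) \le \tfrac{1-\rho}{\rho}(\D(p_{i-1}^n) - \D(\tilde p_i^n))$. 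Finally, the substep bound \eqref{eq:dd-energy-difference-stepsize} converts $\D(p_{i-1}^n) - \D(\tilde p_i^n)$ into $\tfrac{1}{\sigma}(\D(p_{i-1}^n) - \D(p_i^n))$.

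What remains is a scalar optimization of the coefficient $1+\omega + (1+\omega^{-1})(1-\rho)$ over $\omega > 0$; differentiation gives the optimum $\omega = \sqrt{1-\rho}$, and substituting back produces the stated constant $2-\rho+2\sqrt{1-\rho}$. I expect the main obstacle to be the careful coordination of the three estimates (Young, approximate minimization, substep monotonicity) so that the exact minimizer $\hat p_i^n$ drops out in favor of $p_{i-1}^n$ and $p_i^n$ only. The Young parameter $\omega$ is there precisely to balance the two $\rho$-dependent factors arising from approximate minimization; any convex weighting would give a bound, but $\omega = \sqrt{1-\rho}$ is needed to recover the sharp constant stated.
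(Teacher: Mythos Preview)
Your proposal is correct and follows essentially the same route as the paper: rescale via the update rule $p_i^n - p_{i-1}^n = \sigma(\tilde p_i^n - p_{i-1}^n)$, apply the weighted Young inequality with parameter $\omega$, convert both squared $\|\cdot\|_*$-norms to energy gaps using \cref{Lemma22}(i) together with the first-order optimality of $\hat p_i^n$, then invoke \eqref{eq:approxmin} and \eqref{eq:dd-energy-difference-stepsize}, and finally optimize $\omega = \sqrt{1-\rho}$. The paper's proof matches your outline step for step.
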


\begin{proposition} \label{prop:ratestep}
  Let $(p^n)_{n\in\N_0}$ be the iterates from either one of \cref{alg:ddseq,alg:ddpar} and
  let $\hat p \in K$ denote a minimizer of $\D$.

  Then $\D(p^n) \to \D(\hat p)$ as $n \to \infty$ owing to
  \begin{align*}
    \D(p^n) - \D(\hat p) \le
    \begin{cases}
      \tfrac{2}{\rho\sigma}\alpha \big(\D(p^n) - \D(p^{n+1})\big) & \text{if $\D(p^n) - \D(p^{n+1}) > \tfrac{1}{2}\sigma\rho\Phi^2$}, \\
      \sqrt{\tfrac{2}{\rho\sigma}}\Phi\alpha\sqrt{\D(p^n) - \D(p^{n+1})} & \text{else},
    \end{cases}
  \end{align*}
  where $\alpha := 1 + M\sigma\sqrt{2-\rho+2\sqrt{1-\rho}}$ for \cref{alg:ddseq}
  and $\alpha := 1$ for \cref{alg:ddpar},
  and $\Phi := \sqrt{\|B^{-1}\|} \NormLambda C_\theta R_{\hat p}$.
  \begin{proof}
    Using convexity we expand
    \begin{align} \label{eq:dd_step_expansion}
      \D(p^n) - \D(\hat p)
      &\le \<\D'(p^n), p^n - \hat p\>_V \notag \\
      &= \sum_{i=1}^M \<\D'(p^n), \theta_i(p^n - \hat p)\>_V \notag \\
      &= \sum_{i=1}^M \Big(\<\D'(p^n_{i-1}), \theta_i(p^n - \hat p)\>_V
        + \sum_{j = 1}^{i - 1} \<\D'(p^n_{j-1}) - \D'(p^n_j), \theta_i(p^n - \hat p)\>_V \Big).
    \end{align}
    Let $\Phi_n := (\sum_{i=1}^M \|\theta_i(p^n - \hat p)\|_*^2)^{\frac{1}{2}}$, $\hat v^n_i \in \argmin_{v_i \in \theta_i K} \D(p_{i-1}^n + (v_i - \theta_i p^n))$ and $\hat p_i^n := p_{i-1}^n + (\hat v_i^n - \theta_i p^n)$.
    We now estimate the first summand in the expansion above:
    \begin{align*}
      &\sum_{i=1}^M \<\D'(p^n_{i-1}), \theta_i(p^n - \hat p)\>_V \\
      &\qquad= \tfrac{1}{\mu} \sum_{i=1}^M \<\D'(p^n_{i-1}), \mu\theta_i(p^n - \hat p)\>_V \\
      &\qquad= \tfrac{\mu}{2} \sum_{i=1}^M \|\theta_i(p^n - \hat p)\|_*^2 +
      \tfrac{1}{\mu} \sum_{i=1}^M \big(\D(p^n_{i-1}) - \D(p^n_{i-1} - \mu\theta_i(p^n -
      \hat p))\big) \tag{\cref{Lemma22} (i)}\\
      &\qquad= \tfrac{\Phi_n^2 \mu}{2} + \tfrac{1}{\mu} \sum_{i=1}^M \Big(\D(p^n_{i-1}) - \D\big(p^n_{i-1} + ((1-\mu)\theta_i p^n + \mu \theta_i \hat p - \theta_i p^n)\big)\Big) \\
      &\qquad\le \tfrac{\Phi_n^2 \mu}{2} + \tfrac{1}{\mu} \sum_{i=1}^M (\D(p^n_{i-1}) - \D(\hat p^n_{i})) \tag{optimality} \\
      &\qquad\le \tfrac{\Phi_n^2 \mu}{2} + \tfrac{1}{\mu\rho\sigma} (\D(p^n) -
      \D(p^{n+1})), \tag{\cref{lem:stepdecrease}}
    \end{align*}
    where optimality was used by realizing that $(1-\mu)\theta_i p^n + \mu \theta_i \hat p \in \theta_i K$.
    For the second summand we see
    \begin{align*}
      &\sum_{i=1}^M \sum_{j=1}^{i-1} \<\D'(p^n_{j-1}) - \D'(p^n_j), \theta_i(p^n - \hat p)\>_V\\
      &\qquad= \sum_{i=1}^M \sum_{j=1}^{i-1} \big\<p^n_{j-1} - p^n_{j}, \theta_i(p^n - \hat
        p)\big\>_* \tag{\cref{Lemma22} (ii)} \\
      &\qquad\le \sum_{i=1}^M \sum_{j=1}^{i-1} \|p^n_{j-1} - p^n_j\|_* \|\theta_i(p^n - \hat p)\|_* \\
      &\qquad\le M \Big(\sum_{j=1}^M \|p^n_{j-1} - p^n_j\|_*^2 \Big)^{\frac{1}{2}}
      \Big(\sum_{i=1}^M \|\theta_i (p^n - \hat p)\|_*^2 \Big)^{\frac{1}{2}} \\
      &\qquad\le M \Phi_n \Big(\sum_{j=1}^M \|p^n_{j-1} - p^n_j\|_*^2 \Big)^{\frac{1}{2}}.
    \end{align*}
    Applying \cref{lem:dd-step-distance-energy} completes the estimate of the
    second summand, yielding
    \begin{align*}
      &\sum_{i=1}^M \sum_{j=1}^{i-1}
        \<\D'(p^n_{i-1}) - \D'(p^n_i), \theta_j(p^n - \hat p)\>_V \\
      &\qquad\le M \Phi_n \sqrt{\tfrac{2\sigma}{\rho} (2-\rho+2\sqrt{1-\rho})}
        \big(\D(p^n) - \D(p^{n+1})\big)^{\frac{1}{2}}.
    \end{align*}
    Combining both estimates and roughly bounding $\Phi_n \le \Phi$ due to \cref{lem:normbounds} we have
    \begin{align*}
      \D(p^n) - \D(\hat p)
      &\le \tfrac{\Phi^2 \mu}{2} + \tfrac{1}{\mu\rho\sigma} \big(\D(p^n) -
      \D(p^{n+1})\big) \\
      &\quad +
      M \Phi \sqrt{\tfrac{2\sigma}{\rho} (2-\rho+2\sqrt{1-\rho})} \big(\D(p^n) - \D(p^{n+1})\big)^{\frac{1}{2}}.
    \end{align*}
    Invoking \cref{lem:estsplit} with $a = \tfrac{\Phi^2}{2}$, $b = \frac{1}{\rho\sigma}$ and $c = M \Phi \sqrt{\tfrac{2\sigma}{\rho} (2-\rho+2\sqrt{1-\rho})}$ yields the split bound with the following coefficients:
    \begin{align*}
      2b + c\sqrt{\tfrac{b}{a}}
      &= \tfrac{2}{\rho\sigma} + M \Phi \sqrt{\tfrac{2\sigma}{\rho} (2-\rho+2\sqrt{1-\rho})} \sqrt{\tfrac{2}{\sigma\rho\Phi^2}} \\
      &= \tfrac{2}{\rho\sigma}(1 + M\sigma\sqrt{2-\rho+2\sqrt{1-\rho}}),
      \\
      2\sqrt{ab} + c
      &= 2 \sqrt{\tfrac{\Phi^2}{2\rho\sigma}} + M \Phi \sqrt{\tfrac{2\sigma}{\rho} (2-\rho+2\sqrt{1-\rho})} \\
      &= \sqrt{\tfrac{2}{\rho\sigma}}\Phi(1 + M\sigma\sqrt{2-\rho+2\sqrt{1-\rho}}),
    \end{align*}
    which concludes the proof for \cref{alg:ddseq}.

    For \cref{alg:ddpar}, examining the proof above, we notice that for the
    parallel version we have $p_i^n = p_{i-1}^n = p^n$ for $i = 1, \dotsc, M-1$
    and thus the second summand in \eqref{eq:dd_step_expansion} vanishes
    completely.
    This allows us to invoke \cref{lem:estsplit} with $c = 0$ and leads to the
    desired statement.
  \end{proof}
\end{proposition}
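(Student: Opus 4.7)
The plan is to produce, for every $\mu \in (0,1]$, an inequality of the shape $\D(p^n) - \D(\hat p) \le a\mu + b\mu^{-1} x + c\sqrt{x}$ with $x := \D(p^n) - \D(p^{n+1})$, and then feed it into \cref{lem:estsplit}. The constants to aim for are $a = \Phi^2/2$, $b = (\rho\sigma)^{-1}$, and $c = M\Phi\sqrt{(2\sigma/\rho)(2-\rho+2\sqrt{1-\rho})}$, with $c$ collapsing to $0$ in the parallel case. I would start from convexity, $\D(p^n) - \D(\hat p) \le \<\D'(p^n), p^n - \hat p\>_V$, split by $\sum_i \theta_i = I$, and telescope each term as $\<\D'(p^n), \theta_i(p^n-\hat p)\>_V = \<\D'(p^n_{i-1}), \theta_i(p^n-\hat p)\>_V + \sum_{j<i}\<\D'(p^n_{j-1}) - \D'(p^n_j), \theta_i(p^n-\hat p)\>_V$. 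The first block produces the $a\mu + b\mu^{-1}x$ contribution; the second block produces the $c\sqrt{x}$ contribution and vanishes identically for \cref{alg:ddpar}, since there $p^n_{i-1} = p^n$ for every $i$.

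For the first block I would rewrite $\<\D'(p^n_{i-1}), \theta_i(p^n-\hat p)\>_V = \mu^{-1}\<\D'(p^n_{i-1}), \mu\theta_i(p^n-\hat p)\>_V$ and invoke \cref{Lemma22}(i), obtaining $\tfrac{\mu}{2}\|\theta_i(p^n-\hat p)\|_*^2 + \mu^{-1}\bigl(\D(p^n_{i-1}) - \D(p^n_{i-1} - \mu\theta_i(p^n-\hat p))\bigr)$. Since $(1-\mu)\theta_i p^n + \mu\theta_i\hat p \in \theta_i K$ by convexity of $\theta_i K$, the point $p^n_{i-1} - \mu\theta_i(p^n-\hat p)$ is feasible for the $i$-th subproblem, and optimality of $\hat v^n_i$ yields $\D(p^n_{i-1} - \mu\theta_i(p^n-\hat p)) \ge \D(\hat p^n_i)$. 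Summing over $i$ and applying \cref{lem:stepdecrease} converts $\sum_i(\D(p^n_{i-1}) - \D(\hat p^n_i))$ into $(\rho\sigma)^{-1} x$, while \cref{lem:normbounds} together with the uniform bound $\|p^n - \hat p\|_V \le R_{\hat p}$ from \eqref{eq:dd_iterates_radius} controls the quadratic piece by $\tfrac{\mu}{2}\Phi^2$.

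For the telescoping correction I would apply \cref{Lemma22}(ii) to rewrite each term as $\<p^n_{j-1}-p^n_j, \theta_i(p^n-\hat p)\>_*$ and use Cauchy-Schwarz in the seminorm $\|\cdot\|_*$ (which is valid since $\<\cdot,\cdot\>_*$ arises from a $B^{-1}$-inner product on $\LambdaStar$-images). Collapsing the double sum via $\sum_i\sum_{j<i} a_j b_i \le M(\sum_j a_j^2)^{1/2}(\sum_i b_i^2)^{1/2}$ gives a product of two square-root factors: \cref{lem:dd-step-distance-energy} and a telescope bound the first by $\sqrt{(2\sigma/\rho)(2-\rho+2\sqrt{1-\rho})\,x}$, and \cref{lem:normbounds} with $R_{\hat p}$ bounds the second by $\Phi$. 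Adding both blocks produces exactly the targeted inequality.

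Finally I would invoke \cref{lem:estsplit}. The main algebraic check is that, with the constants identified above, $2b + c\sqrt{b/a}$ collapses to $(2/(\rho\sigma))\alpha$ and $2\sqrt{ab}+c$ collapses to $\sqrt{2/(\rho\sigma)}\,\Phi\alpha$, the common factor $\alpha = 1 + M\sigma\sqrt{2-\rho+2\sqrt{1-\rho}}$ emerging from the identity $c = \sqrt{2/(\rho\sigma)}\,\Phi \cdot M\sigma\sqrt{2-\rho+2\sqrt{1-\rho}}$; the parallel case then drops out by setting $c=0$ so that $\alpha=1$. Convergence $\D(p^n)\to\D(\hat p)$ follows by chaining the per-step bound with the monotone, bounded sequence $(\D(p^n))_n$ from \cref{lem:stepdecrease}: in either case of the bound, $x \to 0$ forces $\D(p^n) - \D(\hat p) \to 0$. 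I expect the main obstacle to be precisely this constant-tracking through \cref{lem:estsplit}, rather than any functional-analytic subtlety — the geometry is a routine blend of convexity, optimality, and Cauchy-Schwarz in the $\|\cdot\|_*$ seminorm, but the two Cauchy-Schwarz applications and the free parameter $\mu$ must be coordinated so the square roots fit together to produce $\alpha$ in both branches.
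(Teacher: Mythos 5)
Your proposal is correct and follows essentially the same route as the paper's proof: the same convexity-plus-telescoping expansion, the same $\mu$-parametrized estimate of the leading block via \cref{Lemma22}(i), feasibility of $(1-\mu)\theta_i p^n + \mu\theta_i\hat p$, and \cref{lem:stepdecrease}, the same Cauchy--Schwarz treatment of the correction block via \cref{Lemma22}(ii) and \cref{lem:dd-step-distance-energy}, and the same final invocation of \cref{lem:estsplit} with identical constants $a$, $b$, $c$. The constant-tracking you flag as the main risk indeed works out exactly as you predict, including the factorization of $\alpha$ out of both branches and the collapse to $c=0$, $\alpha=1$ in the parallel case.
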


Now we are able to prove our main result. 

\subsection{Proof of \cref{thm:ddrate}}\label{sec:proof}
We first observe that since $(\D(p^n))_{n \in \N_0}$ is monotonically
    decreasing, $n_0$ is well-defined and we have
    $\D(p^{n}) - \D(\hat p) \ge \Phi^2 \alpha$ for all $n \in \N_0$, $n < n_0$ and likewise $\D(p^{n}) - \D(\hat p) < \Phi^2 \alpha$ for all $n \in \N_0$, $n \ge n_0$.

    We now make use of \cref{prop:ratestep}.
    The equivalence in \cref{lem:estsplit} then yields
    \begin{align*}
      \D(p^{n-1}) - \D(p^{n}) \ge \begin{cases}
	\frac{\rho\sigma}{2\alpha} (\D(p^{n-1}) - \D(\hat p)) & \text{if $n-1 < n_0$} \\
	\frac{\rho\sigma}{2 \Phi^2\alpha^2} (\D(p^{n-1}) - \D(\hat p))^2 & \text{if $n - 1 \ge n_0$}.
      \end{cases}
    \end{align*}
    In the former case we invert the inequality and add $\D(p^{n-1}) - \D(\hat p)$ to arrive at
    \begin{align*}
      \D(p^n) - \D(\hat p) \le (1 - \tfrac{\rho\sigma}{2\alpha}) (\D(p^{n-1}) - \D(\hat p)),
    \end{align*}
    which recursively yields the required statement for all $n \le n_0$.
    In the latter case we may assume without loss of generality that $n_0 = 0$ since we can shift the sequence if necessary.
    Thus for all $n \in \N_0$:
    \begin{align*}
      \D(p^n) - \D(p^{n+1}) \ge \tfrac{\rho\sigma}{2\Phi^2\alpha^2} (\D(p^n) - \D(\hat p))^2.
    \end{align*}
    Invoking \cref{lem:seqrate} with constant $c := \tfrac{\rho\sigma}{2\Phi^2\alpha^2}$ we obtain
    \begin{align*}
      \D(p^n) - \D(\hat p)
      \le \frac{1}{c n + \tfrac{1}{\D(p^0) - \D(\hat p)}}
      \le \frac{1}{c n + \tfrac{1}{\Phi^2\alpha}}
      \le \frac{1}{c n + \tfrac{\rho\sigma}{2\Phi^2\alpha^2}}
      = \frac{1}{c (n+1)}
    \end{align*}
    since $0 \le \sigma, \rho \le 1$ and $\alpha \ge 1$, thereby showing the second inequality, which completes the proof.
\section{Comparison}\label{sec:comaprison}

We conclude that in special cases the results obtained here are either in
agreement with or may improve upon other known estimates. In particular, in the following we compare our findings with the ones in \cite{park2020additive} and \cite{ChaTaiWanYan}. 

\subsection{Gradient Method Framework \cite{park2020additive}}

In the special case of parallel decomposition, i.e. $\alpha = 1$, and exact
local solutions, i.e.\ $\rho = 1$, the framework of \cite{park2020additive} is applicable
to our model and their estimate \cite[Algorithm
4.1]{park2020additive} reproduces ours.
We show this by specializing and transforming their estimate.

Using notation from \cite{park2020additive} we employ
\cite[Algorithm 4.1]{park2020additive}
by setting $E(u) = F(u) + G(u) := \D(u) + \chi_K (u)$, where $\chi_K(u)=0$ if $u\in K$ and $\infty$ otherwise.
The space decomposition is specified by the images of $\theta_k$, $k = 1,
\dotsc, M$, i.e. $V_k := \operatorname{im} \theta_k \subset V$ with $R_k^*: V_k \to V$ then being the inclusion map.
We chose to use exact local solvers, i.e.\ $\rho = 1$ in our notation, since
it is not obvious to us how our notion of approximate minimization map to theirs.
In particular, $d_k$ and $G_k$ are chosen as in \cite[(4.3)]{park2020additive} and $\omega := \omega_0 := 1$.
We now verify \cite[Assumptions 4.1 to 4.3]{park2020additive} in order to
apply \cite[Theorem 4.7]{park2020additive}.
\cite[Assumption 4.1]{park2020additive} is fulfilled due to \cref{Lemma22,lem:normbounds} with $C_{0,K} := C_\theta\NormLambda\sqrt{\|B^{-1}\|}$ and $q := 2$.
We fulfill \cite[Assumption 4.2]{park2020additive} by choosing $\tau_0 := \frac{1}{N}$ (their $\tau$ corresponds to our $\sigma$).
\cite[Assumption 4.3]{park2020additive} is trivialized in the case of exact local solvers.
Applying \cite[Theorem 4.7]{park2020additive} with $C_{q,\tau} = 2$ and $\kappa =
\tfrac{1}{\tau} C_\theta^2 \NormLambda^2\|B^{-1}\|$ yields
\begin{align} \label{eq:dd_comparison_park_estimate1}
  \D(p^1) - \D(\hat p)
  &\le (1 - \sigma(1 - \tfrac{1}{2})) (\D(p^0) - \D(\hat p))
  =(1 - \tfrac{\sigma}{2}) (\D(p^0) - \D(\hat p))
\intertext{
  if $\D(p^0) - \D(\hat p) \ge \tau R_{\hat p}^2 \kappa = \Phi^2$ and
}
    \label{eq:dd_comparison_park_estimate2}
  \D(p^n) - \D(\hat p)
  &\le \tfrac{C_{q,r}R_{\hat p}^2 \kappa}{(n+1)^{q-1}}
  = \tfrac{2\Phi^2}{\sigma} (n+1)^{-1}
\end{align}
otherwise.
Applying estimate \eqref{eq:dd_comparison_park_estimate1} recursively and shifting the sequence by $n_0$
for the estimate \eqref{eq:dd_comparison_park_estimate2} finally yields the formulation
\begin{align*}
  \D(p^n) - \D(\hat p)
  &\le \begin{cases}
    (1 - \tfrac{\sigma}{2})^n \big(\D(p^{0}) - \D(\hat p)\big) & \text{if $n \le n_0$} \\
    \tfrac{2\Phi^2}{\sigma} (n - n_0 +1)^{-1} & \text{if $n \ge n_0$,}
  \end{cases}
\end{align*}
which is in agreement with \cref{thm:ddrate}.

\subsection{Decomposition of the Rudin-Osher-Fatemi Model \cite{ChaTaiWanYan}}

In order to compare with the convergence rate in \cite{ChaTaiWanYan}, we
specialize our model to their setting by chosing $V = \Hzdiv(\Omega)$,
$\LambdaStar = \op{div}: V \to L^2(\Omega)$, $T = I: L^2(\Omega) \to
L^2(\Omega)$, $\beta = 0$ (thus $B = I$) and $\rho = 1$.
Next we introduce some notation from \cite{ChaTaiWanYan}, namely $C_0, \delta >
0$ such that $\|\nabla \tilde \theta_i\|_{L^\infty} \le \tfrac{C_0}{\delta}$
for $i = 1, \dotsc, M$,
c.f.\ \cite[(2.10)]{ChaTaiWanYan}, $\zeta^0 := 2 (\D(p^0) - \D(\hat p))$ (our $\D$
has an additional factor of $\tfrac{1}{2}$) and $N_0
:= \max_{x\in\Omega} | \{i \in \{1, \dotsc, M\} : x \in \Omega_i\} |$.
Then \cite[Theorem 3.1]{ChaTaiWanYan} and \cite[Theorem 3.6]{ChaTaiWanYan}
provide the following estimate:
\begin{align} \label{eq:}
  \tfrac{1}{2} \|u^n - \hat u\|_{L^2}^2
  \le \D(p) - \D(\hat p)
  \le C n^{-1}
\end{align}
where $u^n := -\op{div} p^n + g$, $\hat u := -\op{div} \hat p + g$ and
\begin{align*}
  C := \tfrac{1}{2}\zeta^0 \Big( \tfrac{2}{\sigma} (2M + 1)^2 + 8\sqrt{2} C_0 \lambda
  |\Omega|^{\frac{1}{2}} (\zeta^0)^{-\frac{1}{2}} \frac{M \sqrt{N_0}}{\delta
    \sqrt{\sigma}}  + \sqrt{2} - 1 \Big)^2.
\end{align*}
Note that we used our notation for $M$ and $\sigma$.

In order to compare favorably in this setting, we slightly refine the estimate
$\Phi_n \le \Phi$ from the proof of \cref{prop:ratestep}.
First, we quantify an estimate from the proof of
\cref{lem:partition-of-unity-requirements}.
For all $p \in V$ we have
\begin{align*}
  \sum_{i=1}^M \|\op{div} \theta_i p\|_{L^2}^2
  &\le \sum_{i=1}^M
    \|\nabla \tilde \theta_i \cdot p + \tilde \theta_i \op{div} p\|_{L^2}^2 \\
  &\le \sum_{i=1}^M \Big((1+\omega) \|\nabla \tilde \theta_i \cdot p\|_{L^2}^2
    + (1 + \omega^{-1}) \|\tilde \theta_i \op{div} p\|_{L^2}^2 \\
  &= (1+\omega) \int_\Omega \sum_{i=1}^M |\nabla \tilde \theta_i \cdot p|^2 \di[x] +
    (1+\omega^{-1}) \int_\Omega \sum_{i=1}^M |\tilde \theta_i \op{div} p|^2 \di[x] \\
  &\le (1+\omega) \int_\Omega \Big( \sum_{i=1}^M |\nabla \tilde \theta_i|^2
  \Big) |p|^2 \di[x] \\
  &\qquad +
    (1+\omega^{-1}) \int_\Omega \Big( \sum_{i=1}^M |\tilde \theta_i|^2\Big) |\op{div} p|^2 \di[x] \\
  &\le (1+\omega) N_0 \|\nabla \tilde
    \theta_i\|_{L^\infty}^2 \|p\|_{L^2}^2
    + (1+\omega^{-1}) \|\op{div} p\|_{L^2}^2 \\
  &\le (1+\omega) N_0 \tfrac{C_0^2}{\delta^2} \|p\|_{L^2}^2
    + (1 + \omega^{-1}) \|\op{div} p\|_{L^2}^2,
\end{align*}
for any $\omega > 0$.
The pointwise box-constraints $|p|\le \lambda$ imply $\|p^n - \hat p\|_{L^2}^2 = \int_\Omega |p^n -
\hat p|^2 \di[x] \le (2\lambda)^2 |\Omega|$.
Combining this allows us to estimate
\begin{align*}
  \Phi_n^2
  := \sum_{i=1}^M \|\theta_i (p^n - \hat p)\|_*^2
  &= \sum_{i=1}^M \|\op{div} \theta_i (p^n - \hat p) \|_{L^2}^2 \\
  &\le (1+\omega) N_0 \tfrac{C_0^2}{\delta^2} \|p^n - \hat p\|_{L^2}^2
    + (1 + \omega^{-1}) \|\op{div} (p^n - \hat p)\|_{L^2}^2 \\
  &\le (1+\omega) \cdot 4 \lambda^2 |\Omega| N_0 \tfrac{C_0^2}{\delta^2}
    + (1 + \omega^{-1}) \zeta^0 \\
  &= \Big(2\lambda |\Omega|^{\frac{1}{2}} N_0^{\frac{1}{2}} \tfrac{C_0}{\delta}
   + (\zeta^0)^{\frac{1}{2}}\Big)^2
   =: \tilde \Phi^2
\end{align*}
by optimally choosing $\omega := (4\lambda^2
  |\Omega| N_0 \frac{C_0^2}{\delta^2} )^{-\frac{1}{2}}(\zeta^0)^{\frac{1}{2}}$.
We therefore conclude that in this specific setting \cref{thm:ddrate} holds
true with $\Phi$ replaced by $\tilde \Phi$.
Their and our estimate thus amount to
\begin{align*}
  \tfrac{1}{2} \|u^n - \hat u\|^2
  &\le C n^{-1}, \\
  \tfrac{1}{2} \|u^n - \hat u\|^2
  &\le \tfrac{2\tilde \Phi^2}{\sigma} \alpha^2(n-n_0 + 1)^{-1},
\end{align*}
where for the lower estimate $\alpha$ and $n_0$ are defined as in
\cref{thm:ddrate} and $n \ge n_0$.
Rewriting the involved constants,
\begin{align*}
  C
  &= \bigg(
    \Big(\tfrac{\sqrt{2}}{2} \tfrac{(2M+1)^2}{\sigma} + \sqrt{2} - 1\Big)
        \sqrt{\zeta^0}
    + 8 \tfrac{M}{\sqrt{\sigma}} \lambda |\Omega|^{\frac{1}{2}}
      \sqrt{N_0} \tfrac{C_0}{\delta}
    \bigg)^2, \\
  \tfrac{2\tilde \Phi^2 \alpha^2}{\sigma}
  &\le \tfrac{2(1+\sigma M)^2}{\sigma} \Big(2\lambda |\Omega|^{\frac{1}{2}}
    \sqrt{N_0} \tfrac{C_0}{\delta} + \sqrt{\zeta^0} \Big)^2 \\
  &= \bigg( \sqrt{2} \tfrac{1+\sigma M}{\sqrt{\sigma}} \sqrt{\zeta^0}
    + 2\sqrt{2} \tfrac{1+\sigma M}{\sqrt{\sigma}} \lambda |\Omega|^{\frac{1}{2}}
    \sqrt{N_0} \tfrac{C_0}{\delta} \bigg)^2,
\end{align*}
we see that $\tfrac{2\tilde\Phi^2\alpha^2}{\sigma} \le C$ by comparing the
relevant
terms before $\sqrt{\zeta^0}$ and $\lambda |\Omega|^{\frac{1}{2}} \sqrt{N_0}
\tfrac{C_0}{\delta}$ under the square separately using $0 < \sigma \le 1$ and $M \ge 1$:
\begin{align*}
  \sqrt{2} \tfrac{1+\sigma M}{\sqrt{\sigma}}
  &\le \tfrac{\sqrt{2}}{\sqrt{\sigma}} (1 + M)
  < \tfrac{\sqrt{2}}{\sigma} \tfrac{(2M+1)^2}{2}
  \le \tfrac{\sqrt{2}}{2} \tfrac{(2M+1)^2}{\sigma} + \sqrt{2} - 1 \\
  2\sqrt{2} \tfrac{1+\sigma M}{\sqrt{\sigma}}
  &\le 3 \tfrac{1 + M}{\sqrt{\sigma}}
  < 4 \tfrac{2M}{\sqrt{\sigma}}
  = 8 \tfrac{M}{\sqrt{\sigma}}.
\end{align*}
Consequently, \cref{thm:ddrate} provides a strictly better estimate than
\cite[Theorems 3.1, 3.6]{ChaTaiWanYan} both for sufficiently large $n \in \N$
and for all $n \in \N$ whenever $n_0 = 0$ (i.e. the initial guess is close
enough).
While we expect \cref{thm:ddrate} to still be better than \cite[Theorems 3.1, 3.6]{ChaTaiWanYan} for $n_0 > 0$, a complete
comparison in that case seems to be more involved and remains to be done.

\section{Surrogate Technique}\label{sec:surrogate-technique}
\label{sec:surrogate}

A surrogate iteration substitutes minimization of one functional for
minimization of different, simpler functionals at the cost of an additional
iterative process.
In particular one can substitute the minimization problem $\inf_{p\in K}
\tfrac{1}{2} \|\LambdaStar p - T^* g\|_{B^{-1}}^2$ by the iteration
\begin{align*}
  \inf_{p^{n+1}\in K} &\tfrac{1}{2} \|\LambdaStar p^{n+1} - f^n\|_W^2,
  \qquad f^n = \LambdaStar p^n - \tfrac{1}{\tau} B^{-1}(\LambdaStar p^n - T^*g),
\end{align*}
producing iterates $(p^n)_{n \in \N}$ for some initialization $p^0 \in V$ that
converge to the same minimizer, provided $\tau \in (\|B^{-1}\|, \infty)$.
Though its properties have been studied extensively in e.g.\
\cite{mairal2013optimization}, we will analyze it as a nested subalgorithm of
our decomposition scheme for approximate minimization following the
notion from \cref{def:approxmin}.
The main motivation for the surrogate technique in our case is to rid the local
problems from the dependence on the potentially costly operator $B^{-1}$.

To that end for $n\in\N_0$ we introduce an auxiliary functional $\D_i^{s,n}$ defined as
\begin{align*}
  \D_i^{s,n}(v_i, w_i) : = \D(p_{i-1}^n + (v_i - \theta_i p^n)) +
    \tfrac{1}{2} \|\LambdaStar (v_i - w_i)\|_{\tau I - B^{-1}}^2
\end{align*}
with $\tau > \|B^{-1}\|$ for $v_i,w_i\in \theta_i K$ and $i=1,\ldots,M$, whereby $\|u\|_{\tau I - B^{-1}}^2:= \<u,(\tau I - B^{-1})u\>_W$ for $u\in W$.

\begin{algorithm}
  \caption{Surrogate approximation} \label{alg:ddsur}
  \begin{algorithmic}[1]
    \Require{$N_{\text{sur}} \in \N$,
      $n \in \N_0$, $i \in \{1, \dotsc, M\}$,
      $p^n \in K$, $p_{i-1}^n \in K$}
    \Ensure{$\tilde v_i^n \in \theta_i K$}
    \State{$v_i^{n,0} = \theta_i p_{i-1}^n$}
    \For{$\ell = 0,1,\ldots, N_{\text{sur}}-1$}
      \State{$v_i^{n,\ell+1} \in \argmin_{v_i\in \theta_i K} \D_i^{s,n}(v_i, v_i^{n,\ell})$}
    \EndFor
    \State{$\tilde v_i^n = v_i^{n,N_{\text{sur}}}$}
  \end{algorithmic}
\end{algorithm}

We note that the subproblems in \cref{alg:ddsur} can be written as
\begin{align*}
  &\inf_{v_i \in \theta_i K} \D_i^{s,n}(v_i, v_i^{n,\ell}) \\
  \iff &\inf_{v_i \in \theta_i K}
    \tfrac{1}{2} \|\LambdaStar (p_{i-1}^n + (v_i - \theta_i p^n)) - T^* g\|_{B^{-1}}^2 +
    \tfrac{1}{2} \|\LambdaStar(v_i - v_i^{n,\ell})\|_{\tau I - B^{-1}}^2 \\
  \iff & \inf_{v_i \in \theta_i K} \tfrac{1}{2} \|\LambdaStar v_i - f_i^n\|_W^2,
\end{align*}
where $f_i^n = \LambdaStar v_i^{n,\ell} - \tfrac{1}{\tau}
B^{-1}(\LambdaStar(p_{i-1}^n + (v_i^{n,\ell} - \theta_i p^n)) - T^* g)$.
The dependence on the operator $B^{-1}$ has thereby been
moved into the preparation of fixed data $f_i^n$ for every subproblem, while
the subproblem itself for fixed $f_i^n$ is independent of $B^{-1}$.

\Cref{alg:ddsur} produces approximations $\tilde{v}_i^n$ to be used in
\cref{alg:ddpar,alg:ddseq}.
Following ideas from \cite[Proposition 2.2]{mairal2013optimization} we show below, that the surrogate approximation converges linearly and any
fixed number of surrogate iterations $N_{\text{sur}}$ is enough to receive the
convergence rate from \cref{thm:ddrate} for the resulting combined algorithm.

\begin{lemma} \label{lem:surrogate_quadratic_growth}
  Using notation and assumptions from \cref{alg:ddsur} the functional $\D_i^n: V_i \to \R$,
  \begin{align*}
    \D_i^n(v) := \D(p_{i-1}^n-\theta_i p^n + v),
  \end{align*}
  has quadratic growth in the sense that
  \begin{align*}
    \D_i^n(v) - \D_i^n(\hat v)
    \ge \tfrac{1}{2\|\tau I - B^{-1}\|\|B\|} \|\LambdaStar(v - \hat v)\|_{\tau I - B^{-1}}^2
  \end{align*}
  for any minimizer $\hat v \in \theta_i K$ of $\D_i^n$.
  \begin{proof}
    Using \cref{Lemma22} and optimality of $\hat v \in \theta_i K$ we see that
    \begin{align*}
      \D_i^n(v) - \D_i^n(\hat v)
      &= \<\D'(p_{i-1}^n + (\hat v - \theta_i p^n)), v - \hat v_i\>
        + \tfrac{1}{2} \|v - \hat v\|_{*}^2 \\
      &\ge \tfrac{1}{2} \|\LambdaStar (v - \hat v)\|_{B^{-1}}^2.
    \end{align*}
    Further noting that $\tau I - B^{-1}$ is positive definite, since $\tau >
    \|B^{-1}\|$,
    \begin{align*}
      \|\LambdaStar (v - \hat v)\|_{\tau I - B^{-1}}^2
      \le \|\tau I - B^{-1}\| \|\LambdaStar (v - \hat v)\|_W^2
      \le \|\tau I - B^{-1}\|\|B\| \|\LambdaStar (v - \hat v)\|_{B^{-1}}^2.
    \end{align*}
    Combining both inequalities yields the statement.
  \end{proof}
\end{lemma}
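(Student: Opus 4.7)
The plan is to exploit two facts in sequence: first, that $\D$ is a quadratic functional, so the expansion in \cref{Lemma22}~(i) is exact; and second, that the three quadratic forms $\|\LambdaStar\,\cdot\,\|_{B^{-1}}^2$, $\|\LambdaStar\,\cdot\,\|_W^2$, and $\|\LambdaStar\,\cdot\,\|_{\tau I - B^{-1}}^2$ are comparable up to operator-norm factors involving $B$ and $\tau I - B^{-1}$. Because $\D_i^n$ is just a translate of $\D$ in its argument, it is also quadratic with the same Hessian $\LambdaOp B^{-1} \LambdaStar$, so \cref{Lemma22}~(i) applies unchanged.

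First, I would apply \cref{Lemma22}~(i) with $p := p_{i-1}^n - \theta_i p^n + v$ and $q := p_{i-1}^n - \theta_i p^n + \hat v$ to write
\begin{equation*}
  \D_i^n(v) - \D_i^n(\hat v)
  = \<\D'(p_{i-1}^n - \theta_i p^n + \hat v),\, v - \hat v\>_V
    + \tfrac{1}{2}\|v - \hat v\|_*^2.
\end{equation*}
Since $\hat v$ minimizes $\D_i^n$ over the convex set $\theta_i K$, the first-order variational inequality makes the linear term non-negative, leaving
\begin{equation*}
  \D_i^n(v) - \D_i^n(\hat v)
  \ge \tfrac{1}{2}\|\LambdaStar(v - \hat v)\|_{B^{-1}}^2.
\end{equation*}

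Second, I would convert this $B^{-1}$-weighted bound into the $\tau I - B^{-1}$-weighted form in the claim. Because $\tau > \|B^{-1}\|$, the self-adjoint operator $\tau I - B^{-1}$ is bounded and positive definite, hence
\begin{equation*}
  \|\LambdaStar(v - \hat v)\|_{\tau I - B^{-1}}^2
  \le \|\tau I - B^{-1}\|\,\|\LambdaStar(v - \hat v)\|_W^2,
\end{equation*}
and the self-adjointness and positivity of $B$ (inherited from the coercive symmetric bilinear form $a_B$) give $\|w\|_W^2 \le \|B\|\,\|w\|_{B^{-1}}^2$ for $w \in W$ by passing through $B^{1/2}$. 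Chaining these inequalities with the lower bound above yields the asserted quadratic growth estimate.

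The step requiring the most care is the norm comparison in the second paragraph: one has to justify that $B$ is self-adjoint and positive so that $\|w\|_W^2 = \<B w, B^{-1} w\>_W \le \|B\|\,\|w\|_{B^{-1}}^2$ is legitimate, and that $\tau I - B^{-1}$ really is positive definite so that $\|\cdot\|_{\tau I - B^{-1}}$ is a genuine seminorm. Both follow directly from the coercivity and symmetry of $a_B$ recorded after \cref{prop:duality} together with the assumption $\tau > \|B^{-1}\|$, so no essentially new estimate is needed; the rest is assembling constants.
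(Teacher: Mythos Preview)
Your proposal is correct and follows essentially the same route as the paper's own proof: expand exactly via \cref{Lemma22}~(i), drop the linear term by the variational inequality for $\hat v$ over $\theta_i K$, and then convert $\|\cdot\|_{B^{-1}}$ to $\|\cdot\|_{\tau I - B^{-1}}$ through the intermediate $W$-norm using the operator-norm bounds for $\tau I - B^{-1}$ and $B$. The only addition over the paper is your explicit remark about self-adjointness of $B$ and the $B^{1/2}$ route for the inequality $\|w\|_W^2 \le \|B\|\,\|w\|_{B^{-1}}^2$, which the paper leaves implicit.
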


\begin{proposition} \label{prop:surrogate_step}
  Using notation and assumptions from \cref{alg:ddsur} and \cref{lem:surrogate_quadratic_growth} the surrogate iterates $(v_i^{n,\ell})_\ell$ satisfy
  \begin{align*}
    \D_i^n(v_i^{n,\ell}) - \D_i^n(v_i^{n,\ell+1})
    &\ge \eta (\D_i^n(v_i^{n,\ell}) - \D_i^n(\hat v_i^{n}))
  \end{align*}
  for all $\ell\in \N$ and for any minimizer $\hat v_i^n \in \theta_i K$ of $\D_i^n$, $i\in\{1,\ldots, n\}$, $ n\in\N_0$,
  while $\eta \in (0,1)$ is given by
  \begin{align*}
    \eta = \begin{cases}
      \tfrac{1}{4\|\tau I - B^{-1}\|\|B\|} & \text{if $\|\tau I - B^{-1}\|\|B\|
      \ge \tfrac{1}{2}$} \\
      1 - \|\tau I - B^{-1}\|\|B\| & \text{else}.
    \end{cases}
  \end{align*}
  \begin{proof}
    Since $\D_i^{s,n}(v,w) = \D_i^n(v) + \tfrac{1}{2} \|\LambdaStar(w-v)\|_{\tau I - B^{-1}}^2$ we have
    \begin{align*}
      &\D_i^n(v_i^{n,\ell+1}) + \tfrac{1}{2} \|\LambdaStar(v_i^{n,\ell} - v_i^{n,\ell+1})\|_{\tau I - B^{-1}}^2 \\
      &\quad= \D_i^{s,n}(v_i^{n,\ell+1}, v_i^{n,\ell}) \\
      &\quad= \min_{v_i \in \theta_i K} \D_i^n(v_i) + \tfrac{1}{2} \|\LambdaStar(v_i^{n,\ell} - v_i)\|_{\tau I - B^{-1}}^2\\
      &\quad\le \min_{\mu \in [0,1]} \D_i^n((1-\mu) v_i^{n,\ell} + \mu \hat
      v_i^n) + \tfrac{\mu^2}{2} \|\LambdaStar(v_i^{n,\ell} - \hat v_i^n)\|_{\tau I - B^{-1}}^2 \\
      &\quad\le \min_{\mu \in [0,1]} (1-\mu) \D_i^n(v_i^{n,\ell}) + \mu
      \D_i^n(\hat v_i^n) +  \tfrac{\mu^2}{2} \|\LambdaStar(v_i^{n,\ell} - \hat
      v_i^n)\|_{\tau I - B^{-1}}^2,
    \end{align*}
    where we searched for the minimum along the line $v_i = (1-\mu)v_{i}^{n,\ell}
    + \mu \hat v_i^n \in \theta_i K$, $\mu \in [0, 1]$, and used convexity afterwards.
    After reordering we use the quadratic growth property from \cref{lem:surrogate_quadratic_growth} to see that
    \begin{align*}
      &\D_i^n(v_i^{n,\ell}) - \D_i^n(v_i^{n,\ell+1}) - \tfrac{1}{2} \|\LambdaStar(v_i^{n,\ell} - v_i^{n,\ell+1})\|_{\tau I - B^{-1}}^2 \\
      &\quad\ge \max_{\mu \in [0,1]} \mu (\D_i^n(v_i^{n,\ell}) - \D_i^n(\hat
      v_i^n)) - \tfrac{\mu^2}{2} \|\LambdaStar(v_i^{n,\ell} - \hat v_i^n)\|_{\tau I - B^{-1}}^2 \\
      &\quad\ge \max_{\mu \in [0,1]} \big(\mu - \mu^2 \|\tau I - B^{-1}\|\|B\|\big) (\D_i^n(v_i^{n,\ell}) - \D_i^n(\hat v_i^n)).
    \end{align*}
    Discarding the last term on the left-hand side and evaluating the maximum
    optimally at $\mu = \min\{1, \frac{1}{2\|\tau I - B^{-1}\| \|B\|} \} \in (0,1]$ yields
    \begin{align*}
      \D_i^n(v_i^{n,\ell}) - \D_i^n(v_i^{n,\ell+1})
      &\ge \eta (\D_i^n(v_i^{n,\ell}) - \D_i^n(\hat v_i^{n}))
    \end{align*}
    where $\eta \in (0,1)$ is given by
    \begin{align*}
      \eta &= \begin{cases}
	\tfrac{1}{4\|\tau I - B^{-1}\|\|B\|} &
          \text{if $\|\tau I - B^{-1}\|\|B\| \ge \tfrac{1}{2}$} \\
	1 - \|\tau I - B^{-1}\|\|B\| &
          \text{else}.
      \end{cases}
    \end{align*}
    ~
  \end{proof}
\end{proposition}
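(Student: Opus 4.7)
The plan is to exploit the variational characterization of $v_i^{n,\ell+1}$ as a minimizer of $\D_i^{s,n}(\cdot, v_i^{n,\ell})$ over $\theta_i K$, and compare it to an admissible convex combination between $v_i^{n,\ell}$ and a true minimizer $\hat v_i^n$. First I would rewrite $\D_i^{s,n}(v_i^{n,\ell+1}, v_i^{n,\ell}) = \D_i^n(v_i^{n,\ell+1}) + \tfrac{1}{2}\|\LambdaStar(v_i^{n,\ell} - v_i^{n,\ell+1})\|_{\tau I - B^{-1}}^2$, which by optimality is bounded above by $\D_i^{s,n}((1-\mu)v_i^{n,\ell} + \mu \hat v_i^n, v_i^{n,\ell})$ for any $\mu \in [0,1]$, since $\theta_i K$ is convex. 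Expanding the latter and using convexity of $\D_i^n$ to estimate $\D_i^n((1-\mu)v_i^{n,\ell} + \mu\hat v_i^n) \le (1-\mu)\D_i^n(v_i^{n,\ell}) + \mu \D_i^n(\hat v_i^n)$ produces the inequality
\[
  \D_i^n(v_i^{n,\ell}) - \D_i^n(v_i^{n,\ell+1}) - \tfrac{1}{2}\|\LambdaStar(v_i^{n,\ell} - v_i^{n,\ell+1})\|_{\tau I - B^{-1}}^2 \ge \mu\bigl(\D_i^n(v_i^{n,\ell}) - \D_i^n(\hat v_i^n)\bigr) - \tfrac{\mu^2}{2}\|\LambdaStar(v_i^{n,\ell} - \hat v_i^n)\|_{\tau I - B^{-1}}^2.
\]

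Next I would invoke \cref{lem:surrogate_quadratic_growth} to convert the quadratic term involving $\hat v_i^n$ into a multiple of $\D_i^n(v_i^{n,\ell}) - \D_i^n(\hat v_i^n)$, namely
$\|\LambdaStar(v_i^{n,\ell} - \hat v_i^n)\|_{\tau I - B^{-1}}^2 \le 2\|\tau I - B^{-1}\|\|B\|\,(\D_i^n(v_i^{n,\ell}) - \D_i^n(\hat v_i^n))$.
Discarding the nonnegative term $\tfrac{1}{2}\|\LambdaStar(v_i^{n,\ell} - v_i^{n,\ell+1})\|_{\tau I - B^{-1}}^2$ on the left then leaves an inequality of the form $\D_i^n(v_i^{n,\ell}) - \D_i^n(v_i^{n,\ell+1}) \ge (\mu - \mu^2 \|\tau I - B^{-1}\|\|B\|)(\D_i^n(v_i^{n,\ell}) - \D_i^n(\hat v_i^n))$ valid for every $\mu \in [0,1]$.

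Finally, I would maximize the scalar coefficient $\mu \mapsto \mu - \mu^2 \|\tau I - B^{-1}\|\|B\|$ over $[0,1]$. The unconstrained maximum is attained at $\mu^* = \tfrac{1}{2\|\tau I - B^{-1}\|\|B\|}$, which lies inside $[0,1]$ precisely when $\|\tau I - B^{-1}\|\|B\| \ge \tfrac{1}{2}$, giving $\eta = \tfrac{1}{4\|\tau I - B^{-1}\|\|B\|}$; otherwise the constrained maximum is reached at $\mu = 1$, yielding $\eta = 1 - \|\tau I - B^{-1}\|\|B\|$. That $\eta \in (0,1)$ in both cases follows from $\tau > \|B^{-1}\|$ and $B$ being invertible. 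The main obstacle of this proof is really upstream, namely verifying the quadratic growth of \cref{lem:surrogate_quadratic_growth}; once that is available, the step above is a standard surrogate-descent calculation inspired by \cite{mairal2013optimization}.
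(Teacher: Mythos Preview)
Your proposal is correct and follows essentially the same route as the paper's proof: both use optimality of $v_i^{n,\ell+1}$ to compare against the convex segment $(1-\mu)v_i^{n,\ell} + \mu \hat v_i^n$, apply convexity of $\D_i^n$, invoke the quadratic growth lemma to absorb the $\|\LambdaStar(v_i^{n,\ell}-\hat v_i^n)\|_{\tau I - B^{-1}}^2$ term, discard the nonnegative penalty on the left, and then maximize $\mu - \mu^2\|\tau I - B^{-1}\|\|B\|$ over $[0,1]$ with the same case distinction.
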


\Cref{prop:surrogate_step} is sharp in the sense that for trivial $B^{-1} = I$
and minimizing $1 < \tau \to 1$, we recover the optimal factor $\eta \to 1$.

\begin{lemma} \label{lem:surrogate_approx}
  The surrogate iterates $(v_i^{n,\ell})_\ell$ from
  \cref{alg:ddsur} yield approximate solutions to the subproblems in the
  sense that
  \begin{align*}
    \D_i^n(v_i^{n,0}) - \D_i^n(v_i^{n,\ell})
    \ge \big(1 - (1-\eta)^\ell\big) (\D_i^n(v_i^{n,0}) - \D_i^n(\hat v_i^n))
  \end{align*}
  for any minimizer $\hat v_i^n \in \theta_i K$ of $\D_i^n$, $i \in \{1 \dotsc,
  M\}$, $n \in \N_0$ and $\eta \in (0,1)$
  defined as in \cref{prop:surrogate_step}.
  \begin{proof}
    Elementary calculation using \cref{prop:surrogate_step} yields a linear energy decrease
    \begin{align*}
      \D_i^n(v_i^{n,\ell+1}) - \D_i^n(\hat v_i^n)
      &= -(\D_i^n(v_i^{n,\ell}) - \D_i^n(v_i^{n,\ell+1})) + \D_i^n(v_i^{n,\ell}) - \D_i^n(\hat v_i^n) \\
      &\le -\eta(\D_i^n(v_i^{n,\ell}) - \D_i^n(\hat v_i^n)) + \D_i^n(v_i^{n,\ell}) - \D_i^n(\hat v_i^n) \\
      &= (1-\eta)(\D_i^n(v_i^{n,\ell}) - \D_i^n(\hat v_i^n))
    \end{align*}
    which we use to find
    \begin{align*}
      \D_i^n(v_i^{n,0}) - \D_i^n(v_i^{n,\ell})
      &= \D_i^n(v_i^{n,0}) - \D_i^n(\hat v_i^n) - (\D_i^n(v_i^{n,\ell}) - \D_i^n(\hat v_i^n)) \\
      &\ge \D_i^n(v_i^{n,0}) - \D_i^n(\hat v_i^n) -(1-\eta)^\ell(\D_i^n(v_i^{n,0}) - \D_i^n(\hat v_i^n)) \\
      &\ge (1 -(1-\eta)^\ell)(\D_i^n(v_i^{n,0}) - \D_i^n(\hat v_i^n)).
    \end{align*}
  \end{proof}
\end{lemma}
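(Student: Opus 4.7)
The plan is a fairly direct recursive application of the single-step decrease guaranteed by \cref{prop:surrogate_step}, so I do not expect any serious obstacle — the main thing is to bookkeep the energy differences carefully so the geometric factor $(1-\eta)^\ell$ emerges.

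First I would use \cref{prop:surrogate_step} and rearrange: writing $a_\ell := \D_i^n(v_i^{n,\ell}) - \D_i^n(\hat v_i^n) \ge 0$, the single-step estimate
\begin{align*}
  \D_i^n(v_i^{n,\ell}) - \D_i^n(v_i^{n,\ell+1}) \ge \eta\,(\D_i^n(v_i^{n,\ell}) - \D_i^n(\hat v_i^n))
\end{align*}
can equivalently be read as $a_{\ell+1} \le (1-\eta)\, a_\ell$. Since $\eta \in (0,1)$ is fixed independently of $\ell$, iterating this contraction immediately gives the geometric bound
\begin{align*}
  \D_i^n(v_i^{n,\ell}) - \D_i^n(\hat v_i^n)
  \le (1-\eta)^\ell \, \big(\D_i^n(v_i^{n,0}) - \D_i^n(\hat v_i^n)\big)
\end{align*}
for every $\ell \in \N_0$. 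Note that this also implicitly uses the fact that the surrogate iteration is well-defined, but that is built into \cref{alg:ddsur} and \cref{prop:surrogate_step}.

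Finally I would rewrite the quantity of interest by inserting and subtracting $\D_i^n(\hat v_i^n)$:
\begin{align*}
  \D_i^n(v_i^{n,0}) - \D_i^n(v_i^{n,\ell})
  &= \big(\D_i^n(v_i^{n,0}) - \D_i^n(\hat v_i^n)\big) - \big(\D_i^n(v_i^{n,\ell}) - \D_i^n(\hat v_i^n)\big) \\
  &\ge \big(1 - (1-\eta)^\ell\big) \big(\D_i^n(v_i^{n,0}) - \D_i^n(\hat v_i^n)\big),
\end{align*}
which is precisely the claim. The nonnegativity of both sides (needed so that the subtraction does not flip any signs) is guaranteed because $\hat v_i^n$ is a minimizer of $\D_i^n$ and \cref{prop:surrogate_step} ensures monotone decrease along the surrogate iterates.
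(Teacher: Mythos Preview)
Your proof is correct and follows essentially the same approach as the paper: both rearrange the single-step estimate from \cref{prop:surrogate_step} into the contraction $\D_i^n(v_i^{n,\ell+1}) - \D_i^n(\hat v_i^n) \le (1-\eta)\big(\D_i^n(v_i^{n,\ell}) - \D_i^n(\hat v_i^n)\big)$, iterate it to obtain the geometric factor $(1-\eta)^\ell$, and then insert and subtract $\D_i^n(\hat v_i^n)$ to reach the claim. Your use of the shorthand $a_\ell$ is the only cosmetic difference.
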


Finally, combining \cref{thm:ddrate} with \cref{lem:surrogate_approx} then
immediately yields the following corollary.

\begin{corollary} \label{cor:ddrate-surrogate}
  \Cref{alg:ddpar,alg:ddseq} with subproblems solved using
  \cref{alg:ddsur} converge in the sense that $\D(p^n) \to \D(\hat p)$.
  Furthermore
  \begin{align*}
    \D(p^n) - \D(\hat p)
    &\le \begin{cases}
      (1 - \tfrac{\rho\sigma}{2\alpha})^n \big(\D(p^{0}) - \D(\hat p)\big) & \text{if $n \le n_0$} \\
      \tfrac{2\Phi^2}{\rho \sigma}\alpha^2 (n - n_0 +1)^{-1} & \text{if $n \ge n_0$,}
    \end{cases}
  \end{align*}
  where $\alpha := 1 + M\sigma\sqrt{2-\rho+2\sqrt{1-\rho}}$ for \cref{alg:ddseq}
  and $\alpha := 1$ for \cref{alg:ddpar},
  $\Phi := \sqrt{\|B^{-1}\|} \NormLambda C_\theta R_{\hat p}$,
  $n_0 := \min \{n \in \N_0 : \D(p^{n}) - \D(\hat p) < \Phi^2 \alpha\}$
  and
  \begin{align*}
    \rho = (1-(1-\eta)^{N_{\text{sur}}}), \qquad
    \eta = \begin{cases}
      \tfrac{1}{4\|\tau I - B^{-1}\|\|B\|} & \text{if $\|\tau I - B^{-1}\|\|B\|
      \ge \tfrac{1}{2}$} \\
      1 - \|\tau I - B^{-1}\|\|B\| & \text{else}.
    \end{cases}
  \end{align*}
  for any fixed number of inner surrogate iterations $N_{\text{sur}} \in \N$.
\end{corollary}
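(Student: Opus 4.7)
The plan is a direct reduction to \cref{thm:ddrate}: the surrogate inner loop of \cref{alg:ddsur} is shown to produce a $\rho$-approximate minimizer in the sense of \cref{def:approxmin} with a specific $\rho \in (0,1]$, after which the claimed rate follows verbatim from \cref{thm:ddrate} with that $\rho$ substituted. All the substantive analytic work has already been done in \cref{prop:surrogate_step,lem:surrogate_approx,thm:ddrate}; the proof of the corollary is essentially a bookkeeping exercise connecting the inner contraction constant $\eta$ to the outer approximation parameter $\rho$.

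Concretely, I would first apply \cref{lem:surrogate_approx} with $\ell = N_{\text{sur}}$, which yields
\begin{align*}
  \D_i^n(v_i^{n,0}) - \D_i^n(\tilde v_i^n)
  \ge \big(1 - (1-\eta)^{N_{\text{sur}}}\big)\big(\D_i^n(v_i^{n,0}) - \D_i^n(\hat v_i^n)\big),
\end{align*}
where $\tilde v_i^n = v_i^{n,N_{\text{sur}}}$ and $\eta \in (0,1)$ is exactly the contraction constant from \cref{prop:surrogate_step} (which does not depend on $n$, $i$, or the starting point, so the same $\eta$ governs every inner loop).

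Next I would match this inequality against \cref{def:approxmin}. Since the surrogate loop is initialized so that the associated composite point equals the current outer iterate $p_{i-1}^n$ (i.e.\ $v_i^{n,0} = \theta_i p^n$, which in the parallel setting coincides with $\theta_i p_{i-1}^n$), we have $\D_i^n(v_i^{n,0}) = \D(p_{i-1}^n)$, $\D_i^n(\tilde v_i^n) = \D(\tilde p_i^n)$ and $\D_i^n(\hat v_i^n) = \D(\hat p_i^n)$. The above estimate thus transcribes into
\begin{align*}
  \D(p_{i-1}^n) - \D(\tilde p_i^n) \ge \rho\big(\D(p_{i-1}^n) - \D(\hat p_i^n)\big),
  \qquad \rho := 1 - (1-\eta)^{N_{\text{sur}}} \in (0,1],
\end{align*}
which is exactly the membership condition characterizing $\tilde v_i^n \in \argmin_{v_i \in \theta_i K}^{\rho, \theta_i p^n} \D(p_{i-1}^n + (v_i - \theta_i p^n))$.

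Finally, with this $\rho$ in hand, the hypotheses of \cref{thm:ddrate} are satisfied for the combined algorithm, and the stated two-regime bound follows immediately after substituting $\rho = 1-(1-\eta)^{N_{\text{sur}}}$ and reading off $\eta$ from \cref{prop:surrogate_step}. The main (mild) obstacle is simply the translation between the two notions of approximation: ensuring that the reference point $v_i^{n,0}$ of the inner surrogate iteration matches the reference $\theta_i p^n$ prescribed by \cref{def:approxmin}, so that a linear contraction of the inner energy gap translates directly into the required multiplicative bound on the outer energy gap.
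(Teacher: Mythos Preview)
Your proposal is correct and follows exactly the paper's approach: the paper states the corollary as an immediate consequence of combining \cref{lem:surrogate_approx} with \cref{thm:ddrate}, and you spell out precisely this reduction. You are in fact more careful than the paper in flagging the reference-point issue (that $v_i^{n,0}$ from \cref{alg:ddsur} must coincide with the $q=\theta_i p^n$ appearing in \cref{def:approxmin}), which the paper leaves implicit.
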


\begin{remark}
In \cref{alg:ddsur} we specify a fixed number of surrogate iterations over all subdomain problems, i.e.\ $N_{sur}$ is the same in each subdomain. However, one may indeed make $N_{sur}$ dependent on $\Omega_i$ leading to $N_{sur,i}$ for $i=1,\ldots,M$. Note that this does not change the statements in \cref{prop:surrogate_step,lem:surrogate_approx} as these estimates only concern the subproblems separately. Moreover, the estimate in \cref{lem:surrogate_approx} is the weaker the smaller $\ell\in\N$ is, since $1-(1-\eta)^{a} > 1-(1-\eta)^{b}$ for $a>b$ as $1-\eta \in (0,1)$. Together with this observation we obtain that in the case of subdomain dependent inner surrogate iterations \cref{cor:ddrate-surrogate} then holds with replacing $N_{sur}$ by $\min_{i\in\{1,\ldots,M\}}N_{sur,i}$, i.e.\ by the minimal number of inner surrogate iterations over all subdomains.
\end{remark}

\begin{remark}\label{rem:general:setting} The above presented algorithms and its analysis is not restricted to problem \eqref{eq:model-motivation-predual} and also holds for more general problems of the following type 
\begin{equation} \label{eq:model}
  \inf_{p \in K} \Big\{\tilde{\D}(p) := \tfrac{1}{2} \|\LambdaStar p - f\|_{B^{-1}}^2 \Big\},
\end{equation}
where $\LambdaStar: V \to W$ is a bounded linear operator, $V, W$ are  real Hilbert spaces, 
$B^{-1}: W \to W$ a positive definite self-adjoint bounded linear operator, $K
\subset V$ a closed convex set, $f \in W$, and $\|q\|_{B^{-1}}^2 := \<B^{-1}q, q\>_W$ for $q \in W$. Assuming coercivity of $\tilde{\D}$ ensures the existence of a solution of \eqref{eq:model}. 

In particular we obtain the same convergence order results for \eqref{eq:model} as for \eqref{eq:model-motivation-predual}, i.e. \cref{thm:ddrate} and \cref{cor:ddrate-surrogate} also hold in case of \eqref{eq:model}.
\end{remark}

\section{Semi-Implicit Dual Algorithm}\label{sec:semi-implicit-algorithm}

Solution strategies for solving \eqref{eq:model-motivation-predual} are especially relevant in our decomposition setting of
\cref{alg:ddpar,alg:ddseq}, since we have to solve subproblems
\eqref{eq:algpdd-subproblem} of the same general form. 
One specific such algorithm is the semi-implicit Lagrange multiplier method due
to Chambolle \cite{Chambolle:04} which solves
\eqref{eq:model-motivation-predual} for the special case $B = I$.
While \cite{Chambolle:04} uses finite differences, we present the
algorithm in a Hilbert space setting and for more general $B$.

As in \eqref{eq:model-motivation-predual} let
$K := \{{p} \in V: |{p}|_F \le \lambda\}$
denote the set of feasible
dual variables.
Similar to \cite{Chambolle:04}
there exists a Lagrange multiplier $\mu \in L^\infty(\Omega)$
corresponding to the constraint in $K$,
c.f.\ \cite[Theorem 1.6]{ito2008lagrange}, such that $p \in V$ is a solution of
\eqref{eq:model-motivation-predual} if and only if
\begin{equation} \label{eq:alg_dual_kkt}
  0 = \LambdaOp B^{-1}(\LambdaStar p - T^*g) + \mu p
\end{equation}
with $\mu \ge 0$, $|p|_F \le 1$ and $\tfrac{\mu}{2}(|p|_F^2 - \lambda^2) = 0$ holds.
Here $\mu p$ is to be understood as pointwise multiplication.

Recall that $\lambda > 0$.
Observing that in a pointwise sense $\mu = 0$ implies $\xi :=
\LambdaOp B^{-1}(\LambdaStar p - T^*g) = 0$ and $\mu > 0$ implies $|p|_F =
\lambda$ almost everywhere,
we deduce from condition \eqref{eq:alg_dual_kkt}, that in either case $\mu =
\tfrac{|\xi|_F}{\lambda}$.
Thus \eqref{eq:alg_dual_kkt} becomes
\begin{align*}
  0 = \xi + \tfrac{|\xi|_F}{\lambda} p.
\end{align*}
The semi-implicit iterative method then uses for some starting value $p^0
\in K$ and stepsize $\tau > 0$ iterates $(p^n)_{n\in \N_0} \subset V$
satisfying
\begin{equation} \label{eq:alg_dual_iterate}
  p^{n+1}
  = p^n - \tau (\xi^n + \tfrac{|\xi^n|_F}{\lambda} p^{n+1}),
\end{equation}
where $\xi^n := \LambdaOp B^{-1}(\LambdaStar p^n - T^*g)$, $n \in \N_0$.
Solving \eqref{eq:alg_dual_iterate} for $p^{n+1}$ then yields \cref{alg:chambolle}.

\begin{algorithm}
  \caption{Semi-implicit dual multiplier method \cite{Chambolle:04}} \label{alg:chambolle}
  \begin{algorithmic}[1]
    \Require{$p^0 \in K$ and $\tau \in (0, \tfrac{1}{\|\LambdaOp B^{-1}\LambdaStar\|})$}
    \For{$n=0,1,2,\ldots$}
      \State{$\xi^n = \LambdaOp B^{-1}(\LambdaStar p^n - T^*g)$}
      \State{$p^{n+1} = \displaystyle
        \lambda \frac{p^n - \tau \xi^n}{\lambda + \tau |\xi^n|_F}$}
    \EndFor
  \end{algorithmic}
\end{algorithm}

Before we prove convergence of \cref{alg:chambolle} in the upcoming
\cref{thm:chambolle_convergence}, let us first make some comments on
\cref{alg:chambolle}.

\begin{remark} \label{rem:chambolle-notes}
  Regarding \cref{alg:chambolle}, take note of the following:
  \begin{itemize}
    \item
      In the trivial case $\lambda = 0$ one sets $p^{n+1} = 0$.
    \item
      If $B^{-1}$ is a local operator, the computation of $\xi^n$ and $p^{n+1}$
      are both local.
      They can therefore be merged together and carried out in parallel over
      the whole domain.
    \item
      One may solve the decomposition subproblems
      \eqref{eq:algpdd-subproblem} by replacing $K$ with $\theta_i K$ and
      $\lambda$ with the pointwise function $\theta_i \lambda$.
    \item
      A more explicit bound for the maximum stepsize $\tau$ to still
      analytically ensure convergence is given by
      \begin{align*}
        \|\LambdaOp B^{-1} \LambdaStar\|
        \le \|\nabla\|^2 \|B^{-1}\|.
      \end{align*}
      For finite differences as used in this paper (see
      \cref{def:finite-differences} below) we have \cite{Chambolle:04}
      \begin{align*}
        \|\nabla_h\|^2 \le 8.
      \end{align*}
  \end{itemize}
\end{remark}

\begin{theorem}[c.f.\ {\cite[Theorem 3.1]{Chambolle:04}}]
    \label{thm:chambolle_convergence}
  Let $p^0 \in K$. Then \cref{alg:chambolle} generates a sequence $(p^n)_{n\in\N_0} \subset K$ such that $\D(p^n) \to \D(\hat p)$ for $n\to\infty$ if $V$ is finite dimensional, where $\hat{p} \in K$ is a minimizer of \eqref{eq:model-motivation-predual}.
  
  \begin{proof}
    We follow along the lines of \cite[Theorem 3.1]{Chambolle:04}.
    Notice that $|p^0|_F \le \lambda$ and thus inductively
    \begin{align*}
      |p^{n+1}|_F
      \le \lambda \tfrac{|p^n|_F + \tau |\xi^n|_F}{\lambda + \tau |\xi^n|_F}
      \le \lambda,
    \end{align*}
    i.e.\ $p^n \in K$ for all $n \in \N$.
    Let $F: V \to V$ denote the iteration function of \cref{alg:chambolle}, such that
    $p^{n+1} = F(p^n)$, $n \in \N_0$.
    Any fixed point of $F$ or equivalently of
    \eqref{eq:alg_dual_iterate} satisfies the
    stationary point condition \eqref{eq:alg_dual_kkt} per construction and, since $\D$
    is convex, will be a minimizer of \eqref{eq:model-motivation-predual}.

    Denote $\eta^n := \tfrac{1}{\tau}(p^n - p^{n+1})$ and bound the energy
    difference
    \begin{align*}
      \D(p^n) - \D(p^{n+1})
      &= -\tfrac{1}{2} \|p^n - p^{n+1}\|_*^2 + \<\D'(p^n), p^n - p^{n+1}\>
        \tag{\cref{Lemma22} (i)} \\
      &= -\tfrac{\tau^2}{2} \|\eta^n\|_*^2
        + \tau\<\xi^n, \eta^n\> \\
      &= \tfrac{\tau}{2}(\|\eta^n\|^2 - \tau\|\eta^n\|_*^2)
        + \tau \<\xi^n - \tfrac{1}{2}\eta^n, \eta^n\> \\
      &= \tfrac{\tau}{2}(\|\eta^n\|^2 - \tau\|\eta^n\|_*^2)
        + \tfrac{\tau}{2} \<\xi^n - \tfrac{|\xi^n|_F}{\lambda} p^{n+1},
          \xi^n + \tfrac{|\xi^n|_F}{\lambda} p^{n+1}\>
          \tag{applying \eqref{eq:alg_dual_iterate}} \\
      &= \tfrac{\tau}{2}(\|\eta^n\|^2 - \tau\<\LambdaOp B \LambdaStar \eta^n, \eta^n\>)
        + \tfrac{\tau}{2}(\|\xi^n\|^2 - \|\tfrac{|\xi^n|_F}{\lambda} p^{n+1}\|^2) \\
      &\ge \tfrac{\tau}{2}\big(1 - \tau \big\|\LambdaOp B^{-1}
        \LambdaStar\big\|\big)\|\eta^n\|^2
        + \tfrac{\tau}{2}\|\xi^n\|^2
          \big(1 - \big\|\tfrac{|p^{n+1}|_F^2}{\lambda^2}\big\|_{L^\infty}\big) \\
      &\ge \tfrac{1}{2\tau}(1 - \tau \|\LambdaOp B^{-1} \LambdaStar\|)
          \|p^n - p^{n+1}\|^2.
    \end{align*}
    We see that as long as $\tau < \|\LambdaOp B^{-1} \LambdaStar\|^{-1}$,
    the sequence $(\D(p^n))_{n\in\N_0}$ is non-increasing and thus, since it is
    non-negative, also convergent.
    The feasible set $K \subset V$ is closed and bounded, see
    \cite[Lemma 3.4]{ours2022semismooth}, and compact since $V$ is finite
    dimensional.
    Consequently there exists a convergent subsequence $(q^n)_{n\in\N} \subset
    (p^n)_{n\in\N} \subset K$, $q^n \to q \in K$ and with continuity of $F$
    we get $F(q^n) \to F(q)$.
    Using the estimate above and the convergence of energies we see that for
    some $c > 0$ we have
    $c\|q^n - F(q^n)\|^2 \le \D(q^n) - \D(F(q^n)) \to 0$ and therefore the limit
    needs to be a fixed point, $q = F(q)$, and thus a minimizer of
    \eqref{eq:model-motivation-predual}.
  \end{proof}
\end{theorem}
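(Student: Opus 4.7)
The plan is to follow Chambolle's classical argument adapted to the Hilbert space and general $B$ setting. First I would verify feasibility: since $p^0 \in K$, an inductive pointwise estimate on the update rule
\[
  p^{n+1} = \lambda \frac{p^n - \tau \xi^n}{\lambda + \tau|\xi^n|_F}
\]
shows $|p^{n+1}|_F \le \lambda \tfrac{|p^n|_F + \tau|\xi^n|_F}{\lambda + \tau|\xi^n|_F} \le \lambda$, hence $(p^n)_{n\in\N_0}\subset K$. Second, I would record that if $F\colon V \to V$ denotes the iteration map, then any fixed point $q = F(q)$ satisfies $\xi + \tfrac{|\xi|_F}{\lambda} q = 0$ with $\xi = \LambdaOp B^{-1}(\LambdaStar q - T^*g)$, which is exactly the KKT condition \eqref{eq:alg_dual_kkt} with multiplier $\mu = |\xi|_F/\lambda$; by convexity this forces $q$ to be a minimizer of \eqref{eq:model-motivation-predual}.

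The central and most delicate step is the energy descent inequality. Setting $\eta^n := \tfrac{1}{\tau}(p^n - p^{n+1})$ and expanding $\D(p^n) - \D(p^{n+1})$ with \cref{Lemma22}(i), I would rewrite the cross term $\<\xi^n, \eta^n\>$ using the update rule \eqref{eq:alg_dual_iterate}, which gives the algebraic identity
\[
  \<\xi^n, \eta^n\> = \tfrac{1}{2}\<\xi^n - \tfrac{|\xi^n|_F}{\lambda}p^{n+1}, \xi^n + \tfrac{|\xi^n|_F}{\lambda}p^{n+1}\> + \tfrac{1}{2}\|\eta^n\|^2.
\]
Combined with $\|\eta^n\|_*^2 \le \|\LambdaOp B^{-1}\LambdaStar\|\,\|\eta^n\|^2$ and the feasibility bound $|p^{n+1}|_F \le \lambda$, the two nonnegative contributions yield
\[
  \D(p^n) - \D(p^{n+1}) \ge \tfrac{1}{2\tau}\bigl(1 - \tau\|\LambdaOp B^{-1}\LambdaStar\|\bigr)\|p^n - p^{n+1}\|^2,
\]
which is the key monotonicity estimate under the stepsize condition $\tau < \|\LambdaOp B^{-1}\LambdaStar\|^{-1}$. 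I expect this computation to be the main technical obstacle, since one must choose the right ``polarization'' splitting of $\<\xi^n,\eta^n\>$ to exploit both the box constraint and the operator bound simultaneously.

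Third, I would close the argument by compactness. The sequence $(\D(p^n))_{n\in\N_0}$ is nonnegative and nonincreasing, hence convergent, so $\D(p^n) - \D(p^{n+1}) \to 0$, and the descent estimate gives $\|p^n - p^{n+1}\| \to 0$. Since $V$ is finite dimensional and $K$ is closed and bounded (hence compact), there is a convergent subsequence $p^{n_k} \to q \in K$. The map $F$ is continuous, so $F(p^{n_k}) = p^{n_k+1} \to F(q)$, and combining with $\|p^{n_k} - p^{n_k+1}\| \to 0$ forces $q = F(q)$. By the second step $q$ is a minimizer, so $\D(q) = \D(\hat p)$, and since the full sequence $(\D(p^n))$ is monotone with a subsequential limit $\D(\hat p)$, the whole sequence converges: $\D(p^n) \to \D(\hat p)$.
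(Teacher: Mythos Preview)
Your proposal is correct and follows essentially the same approach as the paper: the same inductive feasibility check, the same identification of fixed points with minimizers via the KKT condition, the same energy descent computation (your polarization identity for $\langle\xi^n,\eta^n\rangle$ is algebraically equivalent to the paper's rewriting $\tau\langle\xi^n - \tfrac{1}{2}\eta^n,\eta^n\rangle$ via \eqref{eq:alg_dual_iterate}), and the same compactness/continuity argument to extract a fixed-point limit. Your final sentence, deducing convergence of the full energy sequence from monotonicity plus the subsequential limit, is a small clarification that the paper leaves implicit.
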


We note that \cref{thm:chambolle_convergence} guarantees the convergence to a
minimal dual energy, which allows us to reconstruct the optimal primal solution
due to \cref{prop:dual-energy-strong-convexity}.
It does, however, not guarantee convergence of the dual iterates
$(p^n)_{n\in\N}$ themselves.

\section{Numerical Experiments}\label{sec:numerics}

Let for $a,b \in \Z^d$ the discrete domain be given by
\begin{align*}
  \Omega_{h}
  := \Omega_{h,[a,b]}
  := \big\{ x = (x_1, \dotsc, x_d) \in \Z^d : a \le x \le b \big\}\subset \Z^d.
\end{align*}

Computer images given by an array $A \in [0, 1]^{n_1 \times \dotsb
\times n_d}$, $n = (n_1, \dotsc, n_d) \in \N^d$ of intensity values between $0$
(black) and $1$ (white) are then mapped to a discrete function $u_h:
\Omega_{h, [1,n]} \to \R$ by defining $u_h(x) :=
A_x$.

\begin{definition}[Finite Difference Operators] \label{def:finite-differences}
  For $u_h: \Omega_{h} \to \R^m$ and $p_h = (p_{h,1}, \dotsc, p_{h,d}):
  \Omega_{h} \to \R^{d\times m}$ let
  forward differences $\partial_{h,k}^+: \Omega_{h} \to \R^m$ and backward differences
  $\partial_{h,k}^-: \Omega_{h} \to \R^m$ be
  given by
  \begin{align*}
    \partial_{h,k}^+ u_h(x) &:= \begin{cases}
      0 & \text{if $x_k = b_k$}, \\
      u_h(x+he^k) - u_h(x) & \text{else},
    \end{cases}, \\
    \partial_{h,k}^- u_h(x) &:= \begin{cases}
      u_h(x) & \text{if $x_k = a_k$}, \\
      -u_h(x-he^k) & \text{if $x_k = b_k$}, \\
      u_h(x) - u_h(x-he^k) & \text{else},
    \end{cases}
  \end{align*}
  where $e^k \in \N^d$ denotes the $k$-th unit vector, $k = 1, \dotsc, d$.
  The discrete gradient $\nabla_h u_h: \Omega_{h} \to \R^{d\times m}$ and
  discrete divergence $\Div_h p_h: \Omega_{h} \to \R^m$ are then defined as
  \begin{align*}
    \nabla_h u_h := (\partial_{h,k}^+ u_h)_{k=1}^d,
    \qquad
    \Div_h p_h := \sum_{k=1}^d \partial_{h,k}^- p_{h,k}.
  \end{align*}
\end{definition}

For a given discrete overlap $r \in \N$ and a desired number of domains $M \in \N$
we first define a discrete covering of $\Omega_h$ in dimension $d = 1$.
Let $s := |b - a|$ be the diameter of $\Omega_h$, i.e. its length.
Define $M$ approximately equal integer
sublengths given recursively by 
\begin{align*}
  a_i := \Big\lfloor \frac{s + (M-1)r - \sum_{j=1}^{i-1} (a_j - r)}{M - (i-1)} \Big\rfloor,
  \qquad i = 1, \dotsc, M.
\end{align*}
These give rise to the subdomains
\begin{align*}
  \Omega_{h,i} := \{b_i, b_i + 1, \dotsc, b_i + a_i\},
  \qquad
  b_i := \sum_{j=1}^{i-1} (a_j - r),
  \qquad
  i = 1, \dotsc, M
\end{align*}
of diameter $a_i$ and the partition functions $\theta_{h,i}: \Omega_h \to [0,1]$ by
\begin{align*}
  \theta_{h,i}(x) := \min\big\{1, \tfrac{1}{r} \operatorname{dist}(x, [0, s] \setminus [b_i, b_i + a_i]) \big\},
\end{align*}
where $\operatorname{dist}$ is the (Euclidean) distance function.
The above construction in one dimension yields $M$ discrete subdomains
$\Omega_{h,i}$ and a corresponding partition of unity $\theta_{h,i}$ for a
discrete domain $\Omega_h$ of any size provided $M$ and $r$ are chosen such that $a_i \ge 2r$.

Higher dimensions $d > 1$ are realized through a standard tensor-product formulation based on the above construction, yielding $M = \prod_{k=1}^d M_k$ subdomains with overlaps $r = (r_1, \dotsc, r_d)$.

In all our decomposition examples we use \cref{alg:chambolle} as a subproblem solver
if not specified otherwise and choose its stepsize
$\tau = \tfrac{1}{8\|B^{-1}\|}$ in accordance with \cref{rem:chambolle-notes}.

The source code for all following numerical examples has been made available
under a permissive license \cite{DualTVDDJl}.

\subsection{Convergence}

\InputData{data1/fd/convergence/denoising/data.tex}[convergence/denoising]
\InputData{data1/fd/convergence/inpainting/data.tex}[convergence/inpainting]
\InputData{data1/fd/convergence/opticalflow/data.tex}[convergence/opticalflow]

\DeclareDocumentCommand{\Algplots}{m}{%
  \begin{subfigure}[t]{0.9\columnwidth}%
    \begin{tikzpicture}%
      \begin{loglogaxis}[
        width=\columnwidth,
        height=5cm,
        xlabel={iteration $k$},
        ylabel={$\D(p^k) - \D_{\text{min}}$},
        legend pos=south west,
        legend cell align=left,
        unbounded coords=jump,
        ]
        \addplot table [
            col sep=comma,
            x=k,y=glob_energy,
          ] {data1/fd/convergence/#1/energies.csv};
        \addlegendentry{global};
        \addplot table [
            col sep=comma,
            x=k,y=ddseq_energy,
          ] {data1/fd/convergence/#1/energies.csv};
        \addlegendentry{dd sequential};
        \addplot table [
            col sep=comma,
            x=k,y=ddpar_energy,
          ] {data1/fd/convergence/#1/energies.csv};
        \addlegendentry{dd parallel};
      \end{loglogaxis}
    \end{tikzpicture}
    \caption{energy}
  \end{subfigure}
}

We numerically verify the theoretical sublinear convergence properties of
\cref{alg:ddpar} and \cref{alg:ddseq} due to \cref{thm:ddrate} for three different
applications, i.e.\ image denoising, image inpainting and estimating the optical flow.

For each application described below we compare \cref{alg:chambolle} on the
global, non-decomposed problem and the decomposition
\cref{alg:ddpar,alg:ddseq}.
For the global algorithm we abort after $10^6$ iterations, while for the
decomposition algorithms with \cref{alg:chambolle} as a subalgorithm solver,
we abort after $\Data{convergence/denoising/maxiters}$ outer iterations and
each subalgorithm after $\Data{convergence/denoising/ninner}$ inner iterations.
For a fair comparison we denote with $k \in \N$ the outer iterations of
\cref{alg:ddpar,alg:ddseq} and the iterations of the global algorithm inversely
scaled by the number of inner iterations of the decomposition algorithms, that is
\[
k=\begin{cases}
n & \text{if \cref{alg:ddpar} or \cref{alg:ddseq} is used};\\
\frac{n}{100} &\text{if \cref{alg:chambolle} is used as global algorithm}.
\end{cases}
\]
All three algorithms are initialized using $p^0 = 0$.
For both \cref{alg:ddpar,alg:ddseq} in each outer iteration $n$ the subalgorithm on
the $i$-th subdomain is initialized with the current subdomain view $\theta_i
p^n_{i-1} \in \theta_i K$.

In each case we downsample input images to a small size of
$\Data{convergence/denoising/width} \times \Data{convergence/denoising/height}$ pixels and
decompose the domain into $M = 2 \cdot 2$ subdomains with
an overlap of $r_1 = r_2 = 5$ pixels in order to make a very high number of iterations
timely feasible.
For \cref{alg:ddseq} and \cref{alg:ddpar} we use the largest allowable value of
$\sigma$, i.e.\ $\sigma = 1$ and $\sigma = \tfrac{1}{4}$ respectively.

The three applications are realized by making use of \cref{prop:duality}
and setting data $g$, operator $T$ and model parameters $\lambda, \beta$
therein as follows.
\begin{description}
  \item[Denoising]
    We start with a ground truth image $\tilde g$ and
    generate an artificially
    noisy input $g = \tilde g + \eta$, where $\eta$ denotes zero-mean additive
    Gaussian noise with variance $\Data{convergence/denoising/noise_sigma}$.
    Setting $T=I$, and choosing model
    parameters
    $\lambda = \Data{convergence/denoising/lambda}$,
    $\beta = \Data{convergence/denoising/beta}$ we apply the respective
    algorithm to obtain the denoised output $u$.
  \item[Inpainting]
    Starting with a ground truth image $\tilde g$ we artificially mask each
    pixel with probability $\frac{1}{2}$ by setting its value to $0$ (black) to
    receive a corrupted input image $g$.
    Denoting by $A \subseteq \Omega$ the masked area we set
    $T = \mathbf{1}_{\Omega\setminus A}$ where $\mathbf{1}_{\Omega\setminus A}$
    is the indicator function on $\Omega\setminus A$
    while the model parameters are chosen to be
    $\lambda = \Data{convergence/inpainting/lambda}$,
    $\beta = \Data{convergence/inpainting/beta}$.
  \item[Optical flow estimation]
    Given two greyscale images
    $g_0, g_1: \Omega \to [0,1]$ we estimate their
    vector-valued optical flow displacement field $u$
    by setting the difference $g = g_0 - g_1$ as input data and
    $T$ using $Tu = \nabla g_1 \cdot u$.
    This formulation is the linear approximation of the brightness
    constancy constraint suitable for small displacements
    and may be found in \cite[(5.81)]{AubKor}.
    Model parameters are set to
    $\lambda = \Data{convergence/opticalflow/lambda}$,
    $\beta = \Data{convergence/opticalflow/beta}$.
    We visualize the optical flow field $u$ and the benchmark-provided
    ground truth as a color-coded image following
    \cite{BaScLeRoBlSz:11}.
\end{description}

For each of the applications we denote by $\D_{\text{min}}$ the minimum energy
obtained by running the global \cref{alg:chambolle} for a maximum of $10^7$
iterations.
For denoising, inpainting and optical flow estimation we determined
$\D_{\text{min}} \approx \Data{convergence/denoising/energymin}$,
$\D_{\text{min}} \approx \Data{convergence/inpainting/energymin}$ and
$\D_{\text{min}} \approx \Data{convergence/opticalflow/energymin}$ respectively.
The energy values over all iteration for the three algorithms and each
application are plotted in
\cref{fig:convergence_denoising,fig:convergence_inpainting,fig:convergence_opticalflow}
together with respective input and output images.

\begin{figure}%
  \centering%
  \Algplots{denoising}
  \begin{subfigure}[t]{0.3\columnwidth}
    \centering
    \includegraphics[width=\textwidth]{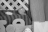}
    \caption{ground truth image}
  \end{subfigure}
  \begin{subfigure}[t]{0.3\columnwidth}
    \centering
    \includegraphics[width=\textwidth]{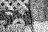}
    \caption{noisy input image}
  \end{subfigure}
  \\
  \begin{subfigure}[t]{0.3\columnwidth}
    \centering
    \includegraphics[width=\textwidth]{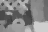}
    \caption{denoised output image, global}
  \end{subfigure}
  \begin{subfigure}[t]{0.3\columnwidth}
    \centering
    \includegraphics[width=\textwidth]{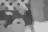}
    \caption{denoised output image, dd sequential}
  \end{subfigure}
  \begin{subfigure}[t]{0.3\columnwidth}
    \centering
    \includegraphics[width=\textwidth]{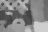}
    \caption{denoised output image, dd parallel}
  \end{subfigure}
  \caption{denoising: convergence of energy and results}
  \label{fig:convergence_denoising}
\end{figure}

\begin{figure}%
  \centering%
  \Algplots{inpainting}
  \begin{subfigure}[t]{0.3\columnwidth}
    \centering
    \includegraphics[width=\textwidth]{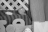}
    \caption{ground truth image}
    \label{fig:rubberwhale1}
  \end{subfigure}
  \begin{subfigure}[t]{0.3\columnwidth}
    \centering
    \includegraphics[width=\textwidth]{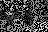}
    \caption{corrupted input image}
    \label{fig:rubberwhale_region}
  \end{subfigure}
  \\
  \begin{subfigure}[t]{0.3\columnwidth}
    \centering
    \includegraphics[width=\textwidth]{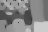}
    \caption{inpainted output image, global}
  \end{subfigure}
  \begin{subfigure}[t]{0.3\columnwidth}
    \centering
    \includegraphics[width=\textwidth]{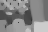}
    \caption{inpainted output image, dd sequential}
  \end{subfigure}
  \begin{subfigure}[t]{0.3\columnwidth}
    \centering
    \includegraphics[width=\textwidth]{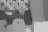}
    \caption{inpainted output image, dd parallel}
  \end{subfigure}
  \caption{inpainting: convergence of energy and results}
  \label{fig:convergence_inpainting}
\end{figure}

\begin{figure}%
  \centering%
  \Algplots{opticalflow}
  \begin{subfigure}[t]{0.3\columnwidth}
    \centering
    \includegraphics[width=\textwidth]{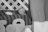}
    \caption{first image $f_0$ of image sequence}
  \end{subfigure}
  \begin{subfigure}[t]{0.3\columnwidth}
    \centering
    \includegraphics[width=\textwidth]{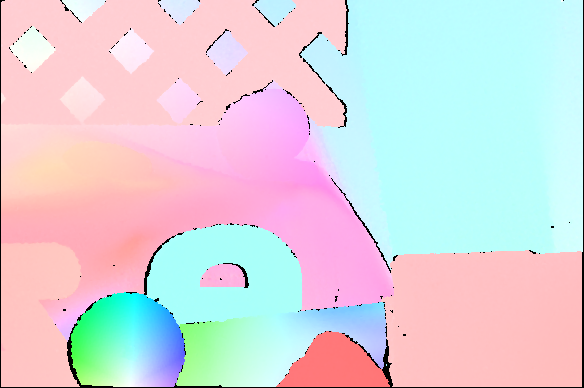}
    \caption{optical flow ground truth from \cite{BaScLeRoBlSz:11} (original resolution)}
  \end{subfigure}
  \\
  \begin{subfigure}[t]{0.3\columnwidth}
    \centering
    \includegraphics[width=\textwidth]{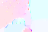}
    \caption{computed optical flow, global}
  \end{subfigure}
  \begin{subfigure}[t]{0.3\columnwidth}
    \centering
    \includegraphics[width=\textwidth]{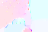}
    \caption{computed optical flow, dd sequential}
  \end{subfigure}
  \begin{subfigure}[t]{0.3\columnwidth}
    \centering
    \includegraphics[width=\textwidth]{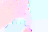}
    \caption{computed optical flow, dd parallel}
  \end{subfigure}
  \caption{optical flow: convergence of energy and results}
  \label{fig:convergence_opticalflow}
\end{figure}

We observe in \cref{fig:convergence_denoising} similar behaviour as in \cite{ChaTaiWanYan}, i.e.\ the sequential
decomposition has a slight edge on the global algorithm due to domain-overlap, while the energy curve of the parallel averaging algorithm displays a characteristic bulge in the beginning.
In \cref{fig:convergence_inpainting,fig:convergence_opticalflow} the performance difference during the iterations
between the sequential and parallel algorithm is less visible for both
inpainting and optical flow estimation.
In all cases the domain decomposition algorithms converge at a sublinear rate comparable to the respective global algorithm.

\subsection{Surrogate}

\InputData{data1/fd/surrogate/outerinner/data.tex}[surrogate/outerinner]

For local operators $B$ we compare (i) nesting the surrogate iteration
(\cref{alg:ddsur}) within domain decomposition and (ii) nesting domain
decomposition within a global surrogate iteration.
Note that for $B = I$, $\tau \to 1$ and a single surrogate iteration both of
these are identical.

We use the optical flow problem with frames of original size
$\Data{surrogate/outerinner/width} \times \Data{surrogate/outerinner/height}$
pixels
and model parameters
$\beta = \Data{surrogate/outerinner/beta}$, $\lambda = \Data{surrogate/outerinner/lambda}$.
The number of subdomains is $M = 4 \cdot 4$ with larger overlap $r_1 = r_2 = 50$ pixels
corresponding to the larger image size.
We perform for both ways of nesting $\Data{surrogate/outerinner/ninner}$
iterations of the inner algorithm and stop the whole algorithm after
$\Data{surrogate/outerinner/maxiters}$ outer iterations.
We estimate the minimal energy $\D_{\text{min}} \approx
\Data{surrogate/outerinner/energymin}$ by running \cref{alg:chambolle} for
$\Data{surrogate/outerinner/minenergy_maxiters}$ iterations.

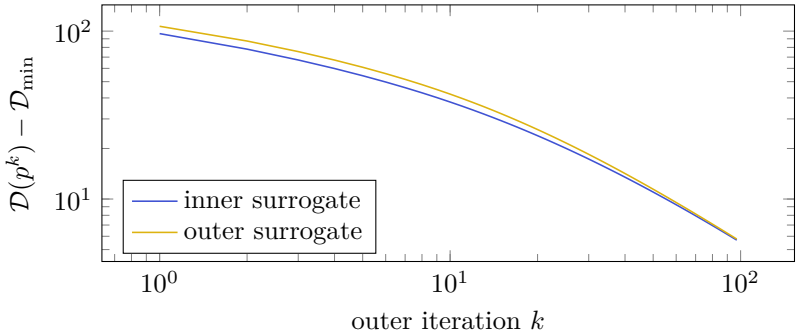
\begin{figure}
  \centering
  \begin{subfigure}[t]{0.9\columnwidth}
    \centering
    \begin{tikzpicture}
      \begin{loglogaxis}[
        width=\columnwidth,
        height=5cm,
        xlabel={outer iteration $k$},
        ylabel={$\D(p^k) - \D_{\text{min}}$},
        legend pos=south west,
        legend cell align=left,
        unbounded coords=jump,
        ]
        \addplot table [
            col sep=comma,
            x=k,y=ddseq_surrogate_energy,
          ] {data1/fd/surrogate/outerinner/energies.csv};
        \addlegendentry{inner surrogate};
        \addplot table [
            col sep=comma,
            x=k,y=surrogate_ddseq_energy,
          ] {data1/fd/surrogate/outerinner/energies.csv};
        \addlegendentry{outer surrogate};
      \end{loglogaxis}
    \end{tikzpicture}
    \caption{energy}
  \end{subfigure}
  \caption{Comparison of outer and inner surrogate, \Data{surrogate/outerinner/ninner} inner iterations per domain decomposition iteration, one single inner iteration per surrogate iteration}
  \label{fig:surrogate_outerinner}
\end{figure}

Both nestings perform similarly as can be seen in \cref{fig:surrogate_outerinner}, while nesting the surrogate iteration within the domain decomposition has a slight edge. This can be attributed to additional evaluations of $B$ in regions of overlap.

\subsection{Wavelet Transformation}

\InputData{data1/fd/global/basic/data.tex}[global/basic]

To demonstrate feasibility of our method even for global operators, we aim to
apply it to the reconstruction of corrupted wavelet coefficients.
To that end we first define the Wavelet transform $T^\infty$ in a way
convenient to us for working with arbitrarily sized images.

Denote $\Omega_s := \Omega_{h,[1,s]}$ for $s \in \N_0^d$ and
let $k = k(s) \in \N_0^d$ be such that $2k \le s \le 2k + 1$.
We define the $d$-dimensional $n$-th level discrete
Haar wavelet transform $T^n: \R^{\Omega_s} \to \R^{\Omega_s}$ recursively
by $T^0 := I$ and
for $n \ge 1$ by
\begin{align*}
  (T^n u)(\alpha \cdot k + x) := \begin{cases}
    (T^{n-1} T_0 u|_{\Omega_{2k}})(x) & \text{if $\alpha = 0$, $k \ge 1$}, \\
    (T_\alpha u|_{\Omega_{2k}})(x) & \text{if $0 \neq \alpha \le 1$, $k \ge 1$}, \\
    u(\alpha \cdot k + x) & \text{else,}
  \end{cases}
\end{align*}
for all $\alpha \cdot k + x \in \Omega_s$, where
$u: \Omega_s \to \R$, $\alpha, x \in \N^d$, $x \le k$ and the transformation
$T_\alpha: \R^{\Omega_{2k}} \to \R^{\Omega_{k}}$ on the orthant indicated by $\alpha \in \{0,1\}^d$ is given by
\begin{align*}
  (T_\alpha u)(x) := 2^{-\frac{d}{2}}
    \sum_{\substack{\beta\in\N_0^d\\\beta\le 1}}
    (-1)^{|\alpha \cdot \beta|} u\big(2(x - 1) + 1 +\beta\big)
\end{align*}
for all $x \in \N^d$, $x \le k$.
Since $T_\alpha: \R^{\Omega_{2k}} \to \R^{\Omega_k}$ halves the size and for $s
\le 1$ we have $T^n = I$ for any $n \in \N$, the operator $T^n$ becomes idempotent for large enough $n$ and we thus conveniently denote by $T^\infty := \lim_{n\to\infty} T^n$ the full wavelet transform.

We realize the application for wavelet inpainting by again making use of
\cref{prop:duality} and define data $g$, operator $T$ and model parameters
$\lambda, \beta$ therein as follows.
We start out with a ground truth image $g_0 \in \R^{\Omega_s}$ and compute
artificially corrupted wavelet data $g = T g_0 := R T^\infty g_0 \in
\R^{\Omega_s}$ using an operator $R$ as follows.
We select a random subset $J \subset \Omega_s$ by choosing every element of
$\Omega_s$ with probability $\frac{1}{2}$ and define for such fixed $J$ the
operator
$R = R_J: \R^{\Omega_s} \to \R^{\Omega_s}$ by
\[
  (Ru)(x) = \begin{cases}
    u(x) & \text{if $x \not\in J$}, \\
    0 & \text{if $x \in J$}.
  \end{cases}
\]
For model parameters we use $\lambda = \Data{global/basic/lambda}$ and $\beta =
\Data{global/basic/beta}$.

\begin{figure}
  \centering
  \begin{subfigure}[t]{0.3\columnwidth}
    \centering
    \includegraphics[width=\textwidth]{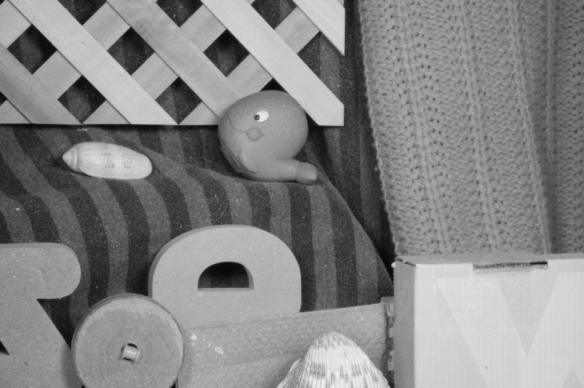}
    \caption{ground truth $g_0$}
  \end{subfigure}
  \begin{subfigure}[t]{0.3\columnwidth}
    \centering
    \includegraphics[width=\textwidth]{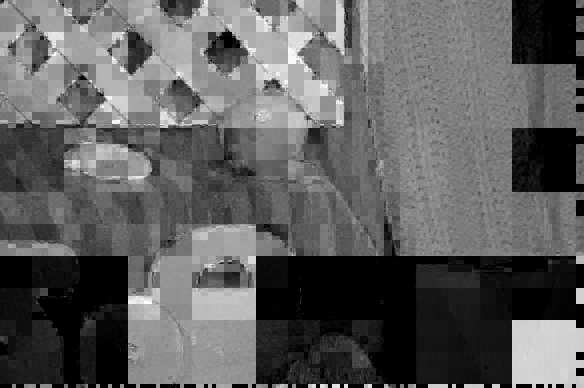}
    \caption{corrupted image $(T^\infty)^{-1} g$}
  \end{subfigure}
  \begin{subfigure}[t]{0.3\columnwidth}
    \centering
    \includegraphics[width=\textwidth]{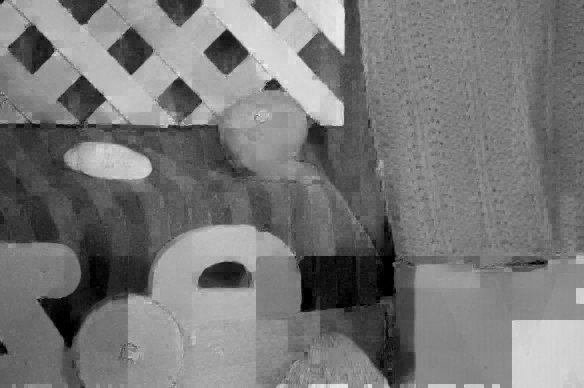}
    \caption{reconstruction $u$}
  \end{subfigure}
  \caption{wavelet inpainting}
  \label{fig:wavelet}
\end{figure}

We decompose the domain into $M = 4 \cdot 4$ domains with an overlap of $r_1 =
r_2 = 5$ pixels and apply
\cref{alg:ddseq} with \cref{alg:ddsur} as a nested subalgorithm.
We use $N_{\text{sur}} = 1$ surrogate iterations,
$\Data{global/basic/ninner}$ iterations for the innermost solver and stop the
outer decomposition algorithm after just $\Data{global/basic/maxiters}$ iterations.
In \cref{fig:wavelet} we can see the used ground truth $g_0$, the corrupted image
visualized as a naive reconstruction $(T^\infty)^{-1} g$ of the corrupted
wavelet data $g$ and the result of
our wavelet inpainting $u$.
Even in regions where bigger chunks of the corrupted image are lost, wavelet
inpainting manages to reconstruct those structures which were preserved by
other wavelet coefficients.

\subsection{Parallel scaling}

\InputData{data1/fd/scaling/opticalflow/data.tex}

\Cref{alg:ddpar,alg:ddseq} allow for a parallel implementation in a domain
decomposition setting.
Indeed, while the subproblems of \cref{alg:ddpar} are independent and may be executed
in parallel without additional consideration, \cref{alg:ddseq} can be parallelized
by applying the algorithm on colored classes of subdomains as in \cite{ChaTaiWanYan} and
calculating the solution on a single colored class of disjoint subdomains in parallel.

We test a parallel implementation of \cref{alg:ddseq} using the same coloring
technique from \cite{ChaTaiWanYan}.
We use $M = \Data{scaling/opticalflow/Mdir} \cdot \Data{scaling/opticalflow/Mdir}$
colored subdomains and otherwise the same parameters and image data as in the
surrogate comparison above.
This means that we have a maximum limit of $9$ disjoint subproblems which can
be scheduled in parallel.
We execute the parallel algorithm with $1, 2, 4$ and $8$ workers respectively
on a Intel(R) Core(TM) i7-5820K CPU @
3.30GHz (6 cores, 12 processing units)
and terminate after reaching an energy of
$\Data{scaling/opticalflow/stopenergy}$.

\begin{figure}[t]
  \centering
  \begin{tikzpicture}
    \begin{loglogaxis}[
      width=0.7\textwidth,
      height=5cm,
      xlabel={number of parallel workers},
      ylabel={runtime (s)},
      legend pos=south west,
      legend cell align=left,
      unbounded coords=jump,
      xtick={1,2,4,8},
      log ticks with fixed point,
      extra x tick style={log identify minor tick positions=false},
      ]
      \addplot table [
          col sep=comma,
          x=nworkers,y=time,
        ] {data1/fd/scaling/opticalflow/timings.csv};
      \addlegendentry{\cref{alg:ddseq}};
      \addplot coordinates {
        (1, 14)
        (8, 10/8)
      };
      \addlegendentry{optimal scaling};
    \end{loglogaxis}
  \end{tikzpicture}
  \caption{time scaling behaviour for the parallel implementation of
  \cref{alg:ddseq} with regard to the number of parallel workers}
  \label{fig:scaling}
\end{figure}
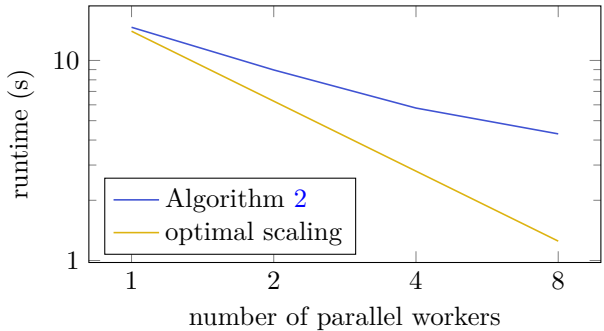

In \cref{fig:scaling} we can see that a parallel implementation can bring about
runtime savings when increasing the number of parallel workers.
The runtime behaves almost inversely linear to the number of workers up to
number of available processor cores, though
the constant factor is not optimal.
We attribute this to the data preparation and communication steps that
are carried out on a single worker and apparently do not scale well in this
implementation.

\section{Conclusion}\label{sec:conclusion}

We have seen that it is possible to improve the domain decomposition
convergence rate results from \cite{ChaTaiWanYan} by making use of different
proof techniques from alternating minimization.
Since as in \cite{ChaTaiWanYan} \cref{alg:ddseq} has a slight
advantage over \cref{alg:ddpar} in terms of iteration count, it suggests that
there is still room for improvement of $\alpha$ in \cref{thm:ddrate} in the
sequential case.

We could easily apply \cref{alg:ddpar,alg:ddseq} to a wider range of local
image processing tasks, namely inpainting and optical flow estimation, while
global operators could only be decomposed by means of the surrogate technique,
which incurred an additional cost.
When considering the total number of iterations of the inner subalgorithm,
\cref{alg:ddpar,alg:ddseq} did not differ substantially in terms of convergence
speed from the global one, i.e. not decomposing the problem, which suggests a
minor overhead of the decomposition method.
A runtime improvement is only to be expected by parallel execution of the
subproblems which we managed to verify in a parallel implementation.
Using the decomposition methods in a memory-constrained computing environment
is expected to be possible.

\section*{Acknowledgement}

This work was partly funded by the Ministerium f\"ur Wissenschaft, Forschung und Kunst of Baden-W\"urttemberg (Az: 7533.-30-10/56/1) through the RISC-project ``Automatische Erkennung von bewegten Objekten in hochaufl\"osenden Bildsequenzen mittels neuer Gebietszerlegungsverfahren'' and by Deutsche Forschungsgemeinschaft (DFG, German Research Foundation) under Germany's Excellence Strategy -- EXC 2075 -- 390740016.


\bibliographystyle{abbrv}

\bibliography{Ref}

\begin{thebibliography}{10}

\bibitem{AmbFusPal}
L.~Ambrosio, N.~Fusco, and D.~Pallara.
\newblock {\em Functions of Bounded Variation and Free Discontinuity Problems}.
\newblock Oxford Mathematical Monographs. The Clarendon Press, Oxford
  University Press, New York, 2000.

\bibitem{AttButMic}
H.~Attouch, G.~Buttazzo, and G.~Michaille.
\newblock {\em Variational Analysis in {S}obolev and {BV} Spaces}.
\newblock MOS-SIAM Series on Optimization. Mathematical Optimization Society
  and {Society for Industrial and Applied Mathematics}, Philadelphia, PA,
  second edition, 2014.

\bibitem{AubKor}
G.~Aubert and P.~Kornprobst.
\newblock {\em Mathematical Problems in Image Processing}, volume 147 of {\em
  Applied Mathematical Sciences}.
\newblock Springer, New York, second edition, 2006.
\newblock Partial differential equations and the calculus of variations, With a
  foreword by Olivier Faugeras.

\bibitem{BaScLeRoBlSz:11}
S.~Baker, D.~Scharstein, J.~P. Lewis, S.~Roth, M.~J. Black, and R.~Szeliski.
\newblock A database and evaluation methodology for optical flow.
\newblock {\em International Journal of Computer Vision}, 92(1):1--31, 2011.

\bibitem{both2022rate}
J.~W. Both.
\newblock On the rate of convergence of alternating minimization for non-smooth
  non-strongly convex optimization in {Banach} spaces.
\newblock {\em Optimization Letters}, 16(2):729--743, 2022.

\bibitem{Chambolle:04}
A.~Chambolle.
\newblock An algorithm for total variation minimization and applications.
\newblock {\em Journal of Mathematical Imaging and Vision}, 20(1-2):89--97,
  2004.

\bibitem{ChaTaiWanYan}
H.~Chang, X.-C. Tai, L.-L. Wang, and D.~Yang.
\newblock Convergence rate of overlapping domain decomposition methods for the
  {R}udin--{O}sher--{F}atemi model based on a dual formulation.
\newblock {\em SIAM Journal on Imaging Sciences}, 8(1):564--591, 2015.

\bibitem{DolJolNat}
V.~Dolean, P.~Jolivet, and F.~Nataf.
\newblock {\em {An Introduction to Domain Decomposition Methods: Algorithms,
  Theory, and Parallel Implementation}}.
\newblock SIAM, Philadelphia, PA, 2015.

\bibitem{ForKimLanSch}
M.~Fornasier, Y.~Kim, A.~Langer, and C.-B. Sch\"onlieb.
\newblock Wavelet decomposition method for {$L_2$/TV}-image deblurring.
\newblock {\em SIAM Journal on Imaging Sciences}, 5(3):857--885, 2012.

\bibitem{ForLanSch2010}
M.~Fornasier, A.~Langer, and C.-B. Sch{\"o}nlieb.
\newblock A convergent overlapping domain decomposition method for total
  variation minimization.
\newblock {\em Numerische Mathematik}, 116(4):645--685, 2010.

\bibitem{ForSch}
M.~Fornasier and C.-B. Sch{\"o}nlieb.
\newblock Subspace correction methods for total variation and
  {$l_1$}-minimization.
\newblock {\em SIAM J. Numer. Anal.}, 47(5):3397--3428, 2009.

\bibitem{DualTVDDJl}
S.~Hilb.
\newblock {DualTVDD}: Dual total variation decomposition algorithm and related
  tools, 2021.
\newblock
  \url{https://gitlab.mathematik.uni-stuttgart.de/stephan.hilb/DualTVDD.jl}.

\bibitem{ours2022semismooth}
S.~Hilb, A.~Langer, and M.~Alkämper.
\newblock A primal-dual finite element method for scalar and vectorial total
  variation minimization.
\newblock Submitted, 2022.

\bibitem{HinKun}
M.~Hinterm{\"u}ller and K.~Kunisch.
\newblock Total bounded variation regularization as a bilaterally constrained
  optimization problem.
\newblock {\em SIAM Journal on Applied Mathematics}, 64(4):1311--1333, 2004.

\bibitem{HinLan2013}
M.~Hinterm{\"u}ller and A.~Langer.
\newblock Subspace correction methods for a class of nonsmooth and nonadditive
  convex variational problems with mixed {$L^1/L^2$} data-fidelity in image
  processing.
\newblock {\em SIAM J. Imaging Sci.}, 6(4):2134--2173, 2013.

\bibitem{HinLan2014}
M.~Hinterm{\"u}ller and A.~Langer.
\newblock Surrogate functional based subspace correction methods for image
  processing.
\newblock In {\em Domain Decomposition Methods in Science and Engineering XXI},
  pages 829--837. Springer, Cham, 2014.

\bibitem{HinLan2015_1}
M.~Hinterm{\"u}ller and A.~Langer.
\newblock Non-overlapping domain decomposition methods for dual total variation
  based image denoising.
\newblock {\em Journal of Scientific Computing}, 62(2):456--481, 2015.

\bibitem{ito2008lagrange}
K.~Ito and K.~Kunisch.
\newblock {\em Lagrange Multiplier Approach to Variational Problems and
  Applications}.
\newblock Advances in Design and Control. SIAM, Philadelphia, 2008.

\bibitem{Lan2021}
A.~Langer.
\newblock Domain decomposition for non-smooth (in particular {TV})
  minimization.
\newblock In {\em Handbook of Mathematical Models and Algorithms in Computer
  Vision and Imaging: Mathematical Imaging and Vision}, pages 1--47. Springer,
  Cham, 2021.

\bibitem{LangerGaspoz:19}
A.~Langer and F.~Gaspoz.
\newblock Overlapping domain decomposition methods for total variation
  denoising.
\newblock {\em SIAM Journal on Numerical Analysis}, 57(3):1411--1444, 2019.

\bibitem{LanOshSch}
A.~Langer, S.~Osher, and C.-B. Sch{\"o}nlieb.
\newblock Bregmanized domain decomposition for image restoration.
\newblock {\em Journal of Scientific Computing}, 54(2-3):549--576, 2013.

\bibitem{LeeNam}
C.-O. Lee and C.~Nam.
\newblock Primal domain decomposition methods for the total variation
  minimization, based on dual decomposition.
\newblock {\em SIAM Journal on Scientific Computing}, 39(2):B403--B423, 2017.

\bibitem{LeeNamPark2019}
C.-O. Lee, C.~Nam, and J.~Park.
\newblock Domain decomposition methods using dual conversion for the total
  variation minimization with {$L^{1}$} fidelity term.
\newblock {\em Journal of Scientific Computing}, 78(2):951--970, 2019.

\bibitem{LeeParkPark2019}
C.-O. Lee, E.-H. Park, and J.~Park.
\newblock A finite element approach for the dual {R}udin--{O}sher--{F}atemi
  model and its nonoverlapping domain decomposition methods.
\newblock {\em SIAM Journal on Scientific Computing}, 41(2):B205--B228, 2019.

\bibitem{Lee2019fast}
C.-O. Lee and J.~Park.
\newblock Fast nonoverlapping block {Jacobi} method for the dual
  {Rudin--Osher--Fatemi} model.
\newblock {\em SIAM Journal on Imaging Sciences}, 12(4):2009--2034, 2019.

\bibitem{LeePark2019}
C.-O. Lee and J.~Park.
\newblock A finite element nonoverlapping domain decomposition method with
  {L}agrange multipliers for the dual total variation minimizations.
\newblock {\em Journal of Scientific Computing}, 81(3):2331--2355, 2019.

\bibitem{LeePark:20}
C.-O. Lee and J.~Park.
\newblock Recent advances in domain decomposition methods for total variation
  minimization.
\newblock {\em Journal of the Korean Society for Industrial and Applied
  Mathematics}, 24(2):161--197, 2020.

\bibitem{LiZhangChangDuan2021}
X.~Li, Z.~Zhang, H.~Chang, and Y.~Duan.
\newblock Accelerated non-overlapping domain decomposition method for total
  variation minimization.
\newblock {\em Numerical Mathematics: Theory, Methods \& Applications}, 14(4),
  2021.

\bibitem{mairal2013optimization}
J.~Mairal.
\newblock Optimization with first-order surrogate functions.
\newblock In {\em International Conference on Machine Learning}, pages
  783--791, 2013.

\bibitem{park2020additive}
J.~Park.
\newblock Additive {S}chwarz methods for convex optimization as gradient
  methods.
\newblock {\em SIAM Journal on Numerical Analysis}, 58(3):1495--1530, 2020.

\bibitem{park2020overlapping}
J.~Park.
\newblock An overlapping domain decomposition framework without dual
  formulation for variational imaging problems.
\newblock {\em Advances in Computational Mathematics}, 46(4):1--29, 2020.

\bibitem{park2021accelerated}
J.~Park.
\newblock Accelerated additive {S}chwarz methods for convex optimization with
  adaptive restart.
\newblock {\em Journal of Scientific Computing}, 89(3):1--20, 2021.

\bibitem{QuaVal}
A.~Quarteroni and A.~Valli.
\newblock {\em Domain decomposition methods for partial differential
  equations}.
\newblock Clarendon Press, Oxford, 1999.

\bibitem{TosWid}
A.~Toselli and O.~B. Widlund.
\newblock {\em Domain decomposition methods: algorithms and theory}, volume~34.
\newblock Springer, Berlin, Heidelberg, 2005.

\end{thebibliography}

\end{document}